\documentclass[a4paper]{article}
\usepackage{amsmath,amsthm,amssymb}
\usepackage[hyphens]{url} 
\usepackage[dvips]{graphicx}
\theoremstyle{plain}

\theoremstyle{definition}
 \newtheorem{exm}{Example}[section]
 
\theoremstyle{remark}
 
 \numberwithin{equation}{section}

\renewcommand{\le}{\leqslant}\renewcommand{\ge}{\geqslant}
\renewcommand{\leq}{\leqslant}

\title{The Syntax of Polytopal Projections: From Permutohedra to Associahedra}

\author{
{Miloš Adžić} \\ {\small Faculty of Philosophy}\\[-2mm] {\small University of Belgrade}\\[-2mm] {\small Belgrade, Serbia}
\and
{Filip D. Jevti\'{c}} \\ {\small Matematical Institute}\\[-2mm] {\small SASA}\\[-2mm] {\small Belgrade, Serbia}\\[-2mm] 
}

\date{}

\newcommand{\Sym}{\mathfrak{S}}      
\newcommand{\Y}{\mathbb{Y}}          
\newcommand{\Prm}{\mathcal{P}}       
\newcommand{\Asc}{\mathcal{K}}       
              
\newcommand{\std}{\mathrm{std}}      
\newcommand{\eval}{\varepsilon}
\newtheorem{theorem}{Theorem}[section]
\newtheorem{proposition}[theorem]{Proposition}
\newtheorem{lemma}[theorem]{Lemma}
\newtheorem{definition}[theorem]{Definition}
\newtheorem{remark}[theorem]{Remark}
\newtheorem{corollary}[theorem]{Corollary}
\usepackage{geometry}
\usepackage{graphicx}
\usepackage{tikz}
\usepackage{subcaption}
\usepackage{mathtools}
\usepackage{xcolor}

\begin{document}

\maketitle

\begin{abstract}
Tonks' projection from the permutohedron to the associahedron and the
Loday--Ronco map both send permutations to planar binary trees. We give a
syntactic account of these maps in the equational calculus of the free
non-symmetric, non-unital operad on one binary generator. The vertex restriction
of Tonks' projection is obtained by evaluating the head-insertion encoding on
the reversed permutation, while the Loday--Ronco map is obtained by evaluating
the decreasing encoding. We also give a local operadic proof that Tonks' vertex
map is order-preserving from the weak Bruhat order to the Tamari order.
\end{abstract}

\section{Introduction}

The associahedron and the permutohedron are among the central objects of the theory of polytopes, and beyond. In this paper we consider the associahedron \(\Asc^{n+1}\) and the permutohedron \(\Prm^n\), which in our indexing convention both have dimension \(n-1\). They belong to the broader family of nestohedra, equivalently hypergraph polytopes (see~\cite{DP2}). A classical connection between them is given by Tonks'~\cite{Ton} cellular quotient map
\[
\theta:\Prm^n\longrightarrow \Asc^{n+1}.
\]
Its restriction to vertices yields a map \(\phi\) from permutations to (planar) binary trees. A closely related map \(\psi\), again from permutations to binary trees, was introduced by Loday--Ronco~\cite{LoR} in their study of dendriform algebras:
\[
\psi:\Sym_n\longrightarrow \Y_n
\]

\begin{figure}[htbp]
    \centering
    \includegraphics[width=0.92\textwidth]{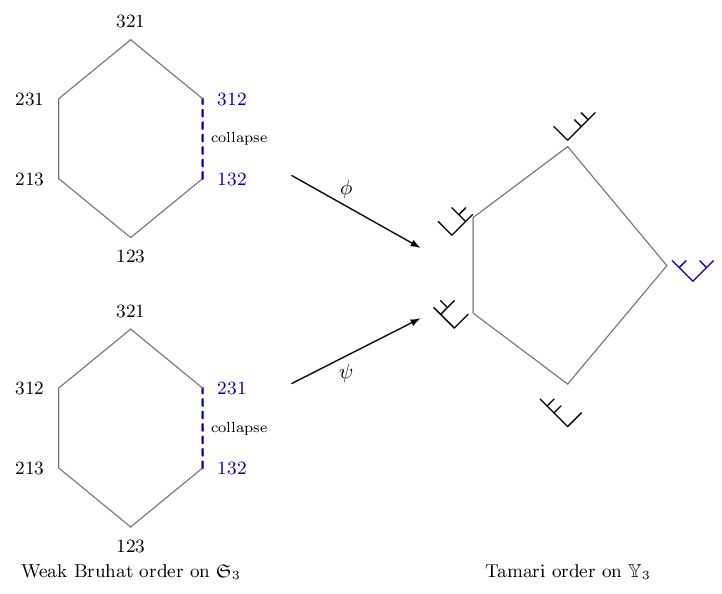}
    \caption{The upper hexagon is the right and the lower hexagon is the left weak Bruhat order on \(\Sym_3\). Tonks' map \(\phi\) collapses the edge \(132\text{--}312\), while the Loday--Ronco map \(\psi\) collapses \(132\text{--}231\) to the same binary tree. In both cases, the quotient is the Tamari order on \(\Y_3\).}\label{fig:projection}
\end{figure}

Figure~\ref{fig:projection} shows the smallest nontrivial case of Tonks' and Loday--Ronco's maps at the level of vertices. We describe both \(\phi\) and \(\psi\) in
the equational calculus \(\mathcal I\) of Do\v{s}en and Petri\'c~\cite{DP1},
which axiomatises the free non-symmetric, non-unital operad on one binary
generator. The two maps arise from indexed operadic terms:
head-insertion for Tonks' map and decreasing insertion for the Loday--Ronco
map. The same syntax also gives a local proof of order preservation for
Tonks' map.

\section{Preliminaries}

In this section we give a rigorous definition of Tonks' cellular quotient map $\theta:\Prm^n\longrightarrow \Asc^{n+1}$, and Loday--Ronco map $\psi:\Sym_n\longrightarrow \Y_n$. To that end, we set up some conventions and recall some facts of combinatorics.

Let $[0]=\emptyset$ and $[n]=\{1,\dots,n\}$ for $n\ge 1$. 
Let $\Sym_n$ denote the symmetric group on $[n]$, with its elements given in one-line notation; in
particular, $\Sym_0=\{\emptyset\}$. 
For $n>0$, let $\Y_n$ denote the set of binary trees with $n$ internal nodes, equivalently
with $n+1$ leaves, and let \(\Y_0=\{\ast\}\), where \(\ast\) denotes the
trivial tree with a single vertex.
Binary trees can be grafted: for \(T_1\in\Y_p\) and \(T_2\in\Y_q\), let  \(T_1\vee T_2\in\Y_{p+q+1}\) be the tree obtained by attaching \(T_1\) and \(T_2\) as the left and right subtrees of a new root, respectively.

Throughout the paper we use the convention that the $(n-1)$-dimensional
permutohedron \(\Prm^n\) has vertex set \(\Sym_n\), while the
\((n-1)\)-dimensional associahedron \(\Asc^{n+1}\) has vertex set \(\Y_n\).
Thus the shift in the notation for \(\Asc^{n+1}\) reflects the indexing of
binary trees by their number of leaves.

For a finite word \(a=a_1\cdots a_k\) of distinct integers, its \emph{standardization}
\(\std(a)\in\Sym_k\) is obtained by replacing the smallest letter of \(a\) by \(1\), the
second smallest by \(2\), and so on, while preserving relative order. If \(a=\emptyset\), we set \(\std(\emptyset)=\emptyset\in\Sym_0\). For a word \(a=a_1\cdots a_k\), let \(w(a)=a_k\cdots a_1\) denote its \emph{reversal}. For \(1\leq t\leq k\), write \(a_{\leq t}=a_1\cdots a_t\) for a prefix of the word \(a\) of length \(t\).

Given a word \(a=a_1\cdots a_k\) of distinct integers and an integer \(x\),
let \(a^{<x}\) denote the subsequence of entries of \(a\) that are strictly
less than \(x\), and let \(a^{>x}\) denote the subsequence of entries of \(a\)
that are strictly greater than \(x\). These subsequences are taken in the
original left-to-right order and may be empty.

For \(\pi=\pi_1\cdots\pi_n\in\Sym_n\), an \emph{inversion} of \(\pi\) is a pair
\((i,j)\) with \(1\le i<j\le n\) and \(\pi_i>\pi_j\). We write \(\ell(\pi)\)
for the number of inversions of \(\pi\). The \emph{(right) weak Bruhat order}
\(\le_B\) on \(\Sym_n\) is the partial order generated by the covering relations
\[
\pi \lessdot_B \pi s_i,
\]
where \(s_i=(i,i+1)\) is the adjacent transposition and $\ell(\pi s_i)=\ell(\pi)+1$. Equivalently,
\[
\pi \lessdot_B \pi s_i
\quad\text{if and only if}\quad
\pi_i<\pi_{i+1},
\]
since right multiplication by \(s_i\) swaps the entries in positions \(i\) and
\(i+1\). For example, in Figure~\ref{fig:projection}, the weak Bruhat cover \(132\lessdot_B 312\) is collapsed by \(\phi\) to a single tree in \(\Y_3\). The \emph{(left) weak Bruhat} order is defined analogously using left multiplication: its covers replace \(\pi\) with \(s_i\pi\) whenever \(\ell(s_i\pi)=\ell(\pi)+1\). In one-line notation this swaps \(i\) and \(i+1\), and the length increases exactly when \(i\) appears to the left of \(i+1\). Thus Figure~\ref{fig:projection} also shows that the left weak order cover from \(132\) to \(231\) is collapsed by the Loday--Ronco map. In the remainder of the paper, we will use ``weak order'' to mean ``right weak order''.

The set \(\Y_n\) has cardinality equal to the \(n\)-th Catalan number $C_n=\frac{1}{n+1}\binom{2n}{n}$.  It is partially ordered by the \emph{Tamari order} \(\le_T\), generated by the local rotation
\[
((X,Y),Z)\longrightarrow (X,(Y,Z)),
\]
where \(X\), \(Y\), and \(Z\) are binary trees, and the rotation may occur at any subtree.

\subsection{The Loday--Ronco map}

%Loday and Ronco~\cite{LoR} introduced the surjective map
%\[
%\psi:\Sym_n\to\Y_n
%\]
%from permutations to binary trees.

\begin{definition}
We define \(\psi\) recursively. For the empty permutation \(\emptyset\), let $\psi(\emptyset)=\ast\in\Y_0$. Let \(\pi=\pi_1\cdots\pi_n\in\Sym_n\) with \(n>0\), and let \(k\) be the unique index such that \(\pi_k=n\). Define
    \[
    \pi_L=\pi_1\cdots\pi_{k-1},
    \qquad
    \pi_R=\pi_{k+1}\cdots\pi_n.
    \]
    Then
    \[
    \psi(\pi)=\psi(\std(\pi_L))\vee\psi(\std(\pi_R)).
    \]
%where, for \(T_1\in\Y_p\) and \(T_2\in\Y_q\), the tree \(T_1\vee T_2\in\Y_{p+q+1}\) is obtained by attaching \(T_1\) and \(T_2\) as the left and right subtrees of a new root, respectively.
\end{definition}

\begin{exm}
Let \(\pi=2413\). The maximum entry is \(4\), occurring in position \(2\). Thus, $\pi_L=2$ and $\pi_R=13$, so
\[
\psi(2413)=\psi(\std(2))\vee\psi(\std(13))
=\psi(1)\vee\psi(12).
\]
Now
\[
\psi(1)=\psi(\emptyset)\vee\psi(\emptyset),
\qquad
\psi(12)=\psi(1)\vee\psi(\emptyset).
\]
Hence \(\psi(2413)\) is the binary tree shown below, obtained by grafting the tree
\(\psi(1)\) to the left of the root and the tree \(\psi(12)\) to the right.

{\centering
\includegraphics[scale=1.4]{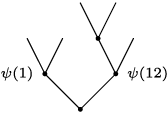}\par}
\end{exm}

\subsection{Tonks' Quotient Map}

%Tonks~\cite{Ton} defined a cellular quotient map $\theta:\Prm^n\longrightarrow \Asc^{n+1}$. 
Combinatorially, the faces of \(\Prm^n\) are indexed by ordered partitions of \([n]\), while the faces of \(\Asc^{n+1}\) are indexed by planar rooted trees
with \(n+1\) leaves. At the level of vertices, ordered partitions reduce to
permutations and planar rooted trees reduce to planar binary trees. Since only this
vertex-level restriction is used below, we pass to the induced map $\phi:\Sym_n\longrightarrow \Y_n$.

\begin{definition}\label{def:independence-perm}
Let \(\pi=\pi_1\cdots\pi_n\in\Sym_n\), and let \((\pi_i,\pi_{i+1})=(a,b)\) be a pair of entries in adjacent positions. We say that \((a,b)\) is \emph{Tonks-independent} if there exists an index \(j>i+1\) such that
\[
\min(a,b)<\pi_j<\max(a,b).
\]
Equivalently, \((a,b)\) is Tonks-independent if and only if some entry to the right of the pair has value strictly between \(a\) and \(b\).
\end{definition}

\begin{remark}
Tonks defines independence more generally for adjacent blocks \(A_{k-1},A_k\) in an ordered partition \((A_1,\dots,A_m)\). In that setting, the condition is that some element of a block strictly to the right of \(A_k\) lie strictly between the minimum and the maximum of \(A_{k-1}\cup A_k\). For vertices, where all blocks are singletons, this reduces to Definition~\ref{def:independence-perm}.
\end{remark}

Let \(\sim\) be the equivalence relation on \(\Sym_n\) generated by the
elementary moves that replace a permutation \(\pi\) by a permutation \(\rho\)
obtained from \(\pi\) by swapping a Tonks-independent pair. Tonks showed~\cite{Ton} that the
quotient \(\Sym_n/\!\sim\) can be canonically identified with \(\Y_n\). Under
this identification, \(\phi:\Sym_n\to\Y_n\) sends each permutation to the binary
tree corresponding to its \(\sim\)-equivalence class. For example, in
Figure~\ref{fig:projection}, the permutations \(132\) and \(312\) are
equivalent under \(\sim\), since the pair \((1,3)\) in \(132\) is
Tonks-independent.

\section{The Free Non-Symmetric Operad}

To describe Tonks' and Loday--Ronco maps syntactically, we introduce a free non-symmetric, non-unital operad on one binary generator (for general background on operads, see~\cite{LoV}).

\begin{definition}
Let \(\mathcal L\) be the term language generated by a single symbol \(\mathbf 2\) of arity \(2\), as follows:
\begin{itemize}
    \item [$\cdot$] \(\mathbf 2\) is a term of arity $|\mathbf{2}|=2$;
    \item [$\cdot$] if \(A\) and \(B\) are terms and \(1\le n\le |A|\), then \(A\circ_n B\) is a term of arity $|A\circ_n B|=|A|+|B|-1$.
\end{itemize}
%Here the arity \(|A|\) of a term \(A\) is defined recursively by
%\[
%|\mathbf 2|=2,
%\qquad
%|A\circ_n B|=|A|+|B|-1.
%\]
\end{definition}

Each term in \(\mathcal L\) determines a binary tree whose leaves are ordered from left to right. Under this interpretation, the generator \(\mathbf 2\) corresponds to the \(2\)-corolla, i.e., the unique binary tree with two leaves, and the term \(A\circ_n B\) corresponds to the tree obtained by grafting the root of the tree represented by \(B\) onto the \(n\)-th leaf of the tree represented by \(A\).

\subsection{The Equational Calculus \(\mathcal I\)}

We introduce the equational calculus \(\mathcal I\) on the term language \(\mathcal L\). It expresses the laws of partial composition in the free non-symmetric, non-unital operad on one binary generator. Our partial composition \(\circ_n\) corresponds to \(\triangleleft_n\) of Do\v{s}en--Petri\'c \cite{DP2} . The calculus \(\mathcal I\) used here is their unit-free calculus \(\mathcal I''\); in particular, we use only \emph{(assoc1)} and \emph{(assoc2)} and no unit axioms.

Let \(=_{\mathcal I}\) be the minimal congruence on \(\mathcal L\) satisfying the following two axioms:
\begin{align}
    \label{eq:assoc1} \tag{assoc1}
    (A \circ_n B) \circ_m C &= A \circ_n (B \circ_{m-n+1} C)
    && \text{if } n \le m < n+|B|, \\
    \label{eq:assoc2} \tag{assoc2}
    (A \circ_n B) \circ_m C &= (A \circ_{m-|B|+1} C) \circ_n B
    && \text{if } n+|B| \le m.
\end{align}

Axiom \emph{(assoc1)} corresponds to the case in which the grafting of $C$
lands
inside the subtree contributed by \(B\), while \emph{(assoc2)} corresponds to the
case in which the graftings of \(B\) and \(C\) occur at disjoint leaves of \(A\).
The following picture illustrates these two situations:

\begin{center}
\includegraphics{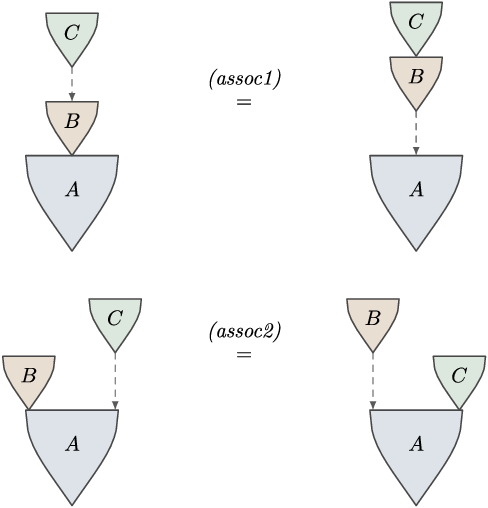}
\end{center}

A standard result from~\cite{DP1} says that two terms of \(\mathcal L\) are equal
modulo \(\mathcal I\) if and only if they determine the same binary tree.

\subsection{Indexed \(l\)-Factors}

To relate the term language \(\mathcal L\) to permutations, we pass to an indexed
version \(\mathcal L^I\), in which the generator \(\mathbf 2\) is replaced by
countably many indexed generators \(\mathbf 2^k\), for \(k\in\mathbb N^+\). We require that no index occurs
more than once in a term. For \(\mathbf A\in\mathcal L^I\), let
\(\operatorname{ind}(\mathbf A)\) denote the set of indices occurring in
\(\mathbf A\).

We now define the recursively generated class of \(l\)-factors used throughout
the paper.

\begin{definition}\label{def:l-factor}
A term \(\mathbf A\in\mathcal L^I\) is an \emph{\(l\)-factor} if it is obtained
recursively as follows:
\begin{itemize}
    \item [$\cdot$] \(\mathbf 2^k\) is an \(l\)-factor for every \(k\in\mathbb N^+\);
    \item [$\cdot$] if \(\mathbf A\) is an \(l\)-factor, \(j\notin\operatorname{ind}(\mathbf A)\), and
    \[
    r=\bigl|\{x\in\operatorname{ind}(\mathbf A):x<j\}\bigr|+1,
    \]
    then \(\mathbf A\circ_r \mathbf 2^j\) is an \(l\)-factor.
\end{itemize}
\end{definition}

The insertion position \(r\) is the rank of \(j\) in the increasing order on
\(\operatorname{ind}(\mathbf A)\cup\{j\}\). 
%Equivalently, it is one plus the number of indices already occurring in \(\mathbf A\) that are smaller than \(j\). 
In particular, \(1\le r\le |\mathbf A|\). For an \(l\)-factor
\(\mathbf A\), define its \emph{root index} recursively by
\[
\operatorname{root}(\mathbf 2^i)=i,
\qquad
\operatorname{root}(\mathbf A\circ_r \mathbf 2^j)=\operatorname{root}(\mathbf A).
\]

The following normalization lemma is central to what follows.

\begin{lemma}\label{lem:normalization}
Let \(\mathbf A\) be an \(l\)-factor, $i=\operatorname{root}(\mathbf A)$, and let
\[
L=\{x\in\operatorname{ind}(\mathbf A):x<i\}
\qquad
\text{and}
\qquad
R=\{x\in\operatorname{ind}(\mathbf A):x>i\}.
\]
Let \(\mathbf A_L\) and \(\mathbf A_R\) be $l$-factors with $\operatorname{ind}(\mathbf A_L)=L$ and $\operatorname{ind}(\mathbf A_R)=R$.
Then $\mathbf{A}$ is equivalent modulo ${\mathcal I}$ to a term in one of the following forms:
\begin{itemize}
    \item [$\cdot$] if $L,R\neq\emptyset$, then $\mathbf A =_{\mathcal I} (\mathbf 2^{i}\circ_2 \mathbf A_R)\circ_1 \mathbf A_L$;
    \item [$\cdot$] if $L=\emptyset$, then $\mathbf A =_{\mathcal I} \mathbf 2^{i}\circ_2 \mathbf A_R$;
    \item [$\cdot$] if $R=\emptyset$, then $\mathbf A =_{\mathcal I} \mathbf 2^{i}\circ_1 \mathbf A_L$;
    \item [$\cdot$] if $\operatorname{ind}(\mathbf A)=\{i\}$, then \(\mathbf A=\mathbf 2^{i}\).
\end{itemize}
\end{lemma}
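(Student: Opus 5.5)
The plan is to induct on the recursive construction of the \(l\)-factor \(\mathbf A\) in Definition~\ref{def:l-factor}. First I fix how \(\mathbf A_L\) and \(\mathbf A_R\) are read. The root of an \(l\)-factor is the first inserted index, and the tree it builds genuinely depends on the order of insertion, so \(\mathbf A_L\) cannot be an arbitrary \(l\)-factor with index set \(L\). I therefore take \(\mathbf A_L\) (resp.\ \(\mathbf A_R\)) to be the \(l\)-factor obtained by inserting the indices of \(L\) (resp.\ \(R\)) in the same relative order in which they are inserted into \(\mathbf A\). By the result of~\cite{DP1} (terms equal modulo \(\mathcal I\) iff they determine the same tree), this choice is harmless up to \(=_{\mathcal I}\).

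The base case \(\mathbf A=\mathbf 2^i\) is the last clause of the lemma. For the inductive step, write \(\mathbf A=\mathbf A'\circ_r\mathbf 2^j\) with \(\operatorname{root}(\mathbf A')=i\), and let \(L',R'\) be the corresponding sets for \(\mathbf A'\). By the induction hypothesis, and since \(=_{\mathcal I}\) is a congruence, we may replace \(\mathbf A'\) by its normal form. The argument splits on whether \(j<i\) or \(j>i\).

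\emph{Case \(j<i\).} Here \(r=|\{x\in L':x<j\}|+1\), so \(1\le r\le |L'|+1=|\mathbf A'_L|\). In the generic form, apply (assoc1) with \(n=1\), \(m=r\), which is legitimate because \(1\le r<1+|\mathbf A'_L|\):
\[
((\mathbf 2^i\circ_2\mathbf A'_R)\circ_1\mathbf A'_L)\circ_r\mathbf 2^j=_{\mathcal I}(\mathbf 2^i\circ_2\mathbf A'_R)\circ_1(\mathbf A'_L\circ_r\mathbf 2^j).
\]
Since \(r\) is exactly the rank of \(j\) in \(L'\cup\{j\}\), the term \(\mathbf A'_L\circ_r\mathbf 2^j\) is the \(l\)-factor \(\mathbf A_L\). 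If \(L'=\emptyset\), then \(r=1\) and \((\mathbf 2^i\circ_2\mathbf A'_R)\circ_1\mathbf 2^j\) is already of the required form.

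\emph{Case \(j>i\).} Now \(r=|L'|+1+s\), where \(s=|\{x\in R':x<j\}|+1\). First apply (assoc2) with \(n=1\), \(|B|=|L'|+1\) and \(m=r\ge |L'|+2\); this moves the insertion past \(\circ_1\mathbf A'_L\), giving \(((\mathbf 2^i\circ_2\mathbf A'_R)\circ_{s+1}\mathbf 2^j)\circ_1\mathbf A'_L\). Next apply (assoc1) with \(n=2\), \(m=s+1\); this is legitimate since \(2\le s+1<2+|R'|+1\), and it yields \(\mathbf 2^i\circ_2(\mathbf A'_R\circ_s\mathbf 2^j)=\mathbf 2^i\circ_2\mathbf A_R\). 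The subcases where \(L'\) or \(R'\) is empty are handled by the same computation with one step omitted. When \(R'=\emptyset\) and \(L'=\emptyset\), the term \(\mathbf 2^i\circ_2\mathbf 2^j\) is already in normal form. When \(R'=\emptyset\) but \(L'\neq\emptyset\), a single (assoc2) step turns \((\mathbf 2^i\circ_1\mathbf A'_L)\circ_{|L'|+2}\mathbf 2^j\) into \((\mathbf 2^i\circ_2\mathbf 2^j)\circ_1\mathbf A'_L\).

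The step I expect to need the most care is the index bookkeeping. One must verify that the side conditions of (assoc1) and (assoc2) hold exactly, and that the shifted positions \(r\) and \(s\) coincide with the ranks demanded by Definition~\ref{def:l-factor}, so that the inner terms really are the \(l\)-factors \(\mathbf A_L\) and \(\mathbf A_R\). The other delicate point is the well-definedness issue settled at the start: the claim is about the specific \(\mathbf A_L\) and \(\mathbf A_R\) induced by the insertion order.
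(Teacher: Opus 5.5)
Your proof is correct and follows the paper's argument essentially step for step. Both induct on the construction of the $l$-factor by writing $\mathbf A=\mathbf A'\circ_r\mathbf 2^j$, use (assoc1) alone when $j<i$, use (assoc2) followed by (assoc1) when $j>i$, and do the same bookkeeping $r=|L'|+1+s$. Reading $\mathbf A_L,\mathbf A_R$ as the specific $l$-factors induced by the insertion order is the right reading: the statement fails for arbitrary $l$-factors with those index sets, and the paper's proof in fact constructs exactly these terms. The appeal to the Do\v{s}en--Petri\'c completeness result is unnecessary, because your induction builds the terms explicitly.
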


\begin{proof}
We argue by induction on $N(\mathbf A):=\bigl|\operatorname{ind}(\mathbf A)\bigr|$. If \(N(\mathbf A)=1\), then \(\mathbf A=\mathbf 2^{i}\), and the claim is immediate. Assume now that \(N(\mathbf A)>1\). By Definition~\ref{def:l-factor}, we may write $\mathbf A=\mathbf B\circ_r \mathbf 2^x$, where \(\mathbf B\) is an \(l\)-factor, \(x\notin\operatorname{ind}(\mathbf B)\), and $r=\bigl|\{y\in\operatorname{ind}(\mathbf B):y<x\}\bigr|+1$. Moreover, $\operatorname{root}(\mathbf A)=\operatorname{root}(\mathbf B)=i$. Since \(i\in\operatorname{ind}(\mathbf B)\) and \(x\notin\operatorname{ind}(\mathbf B)\), we have $x\neq i$.

By the inductive hypothesis, one of the following holds for \(\mathbf B\):

\noindent (1) There exist \(l\)-factors \(\mathbf B_L,\mathbf B_R\) such that
\[
\operatorname{ind}(\mathbf B_L)=\{y\in\operatorname{ind}(\mathbf B):y<i\},\qquad
\operatorname{ind}(\mathbf B_R)=\{y\in\operatorname{ind}(\mathbf B):y>i\},\qquad
\mathbf B =_{\mathcal I} (\mathbf 2^{i}\circ_2 \mathbf B_R)\circ_1 \mathbf B_L.
\]

\noindent (2) There exists an \(l\)-factor \(\mathbf B_R\) such that
\[
\operatorname{ind}(\mathbf B_R)=\{y\in\operatorname{ind}(\mathbf B):y>i\},\qquad
\mathbf B =_{\mathcal I} \mathbf 2^{i}\circ_2 \mathbf B_R.
\]

\noindent (3) There exists an \(l\)-factor \(\mathbf B_L\) such that
\[
\operatorname{ind}(\mathbf B_L)=\{y\in\operatorname{ind}(\mathbf B):y<i\},\qquad
\mathbf B =_{\mathcal I} \mathbf 2^{i}\circ_1 \mathbf B_L.
\]

\noindent (4) \(\mathbf B=\mathbf 2^{i}\).

We distinguish two cases.

\medskip
\noindent\emph{Case 1: \(x<i\).}
Then every index greater than \(i\) is also greater than \(x\). If
\[
\mathbf B =_{\mathcal I} (\mathbf 2^{i}\circ_2 \mathbf B_R)\circ_1 \mathbf B_L,
\]
then
\[
\{y\in\operatorname{ind}(\mathbf B):y<x\}
=
\{y\in\operatorname{ind}(\mathbf B_L):y<x\},
\]
so the insertion index \(r\) is exactly the one prescribed for inserting \(x\) into \(\mathbf B_L\). In particular, $1\le r\le |\mathbf B_L|$.
Applying \emph{(assoc1)} gives
\[
((\mathbf 2^{i}\circ_2 \mathbf B_R)\circ_1 \mathbf B_L)\circ_r \mathbf 2^x
=
(\mathbf 2^{i}\circ_2 \mathbf B_R)\circ_1(\mathbf B_L\circ_r \mathbf 2^x).
\]
Set
\[
\mathbf A_L:=\mathbf B_L\circ_r \mathbf 2^x,
\qquad
\mathbf A_R:=\mathbf B_R.
\]
Then \(\mathbf A_L\) is an \(l\)-factor, \(\mathbf A_R\) is an \(l\)-factor, and
\[
\operatorname{ind}(\mathbf A_L)=\{y\in\operatorname{ind}(\mathbf A):y<i\},
\qquad
\operatorname{ind}(\mathbf A_R)=\{y\in\operatorname{ind}(\mathbf A):y>i\}.
\]
Thus
\[
\mathbf A =_{\mathcal I} (\mathbf 2^{i}\circ_2 \mathbf A_R)\circ_1 \mathbf A_L.
\]
If either 
\[
\mathbf B =_{\mathcal I} \mathbf 2^{i}\circ_2 \mathbf B_R \qquad \text{or} \qquad \mathbf B =_{\mathcal I} \mathbf 2^{i}\circ_1 \mathbf B_L,
\]
the cases are proven similarly, while the case where \(\mathbf B=\mathbf 2^{i}\) is trivial.

\medskip
\noindent\emph{Case 2: \(x>i\).} Then every index smaller than \(i\) is also smaller than \(x\). If
\[
\mathbf B =_{\mathcal I} (\mathbf 2^{i}\circ_2 \mathbf B_R)\circ_1 \mathbf B_L,
\]
let
\[
s=\bigl|\{y\in\operatorname{ind}(\mathbf B_R):y<x\}\bigr|+1.
\]
Since every index in \(\mathbf B_L\), as well as the root index \(i\), is smaller than \(x\), we have $r=|\mathbf B_L|+s$ and $1+|\mathbf B_L|\le r$. Hence \emph{(assoc2)} gives
\[
((\mathbf 2^{i}\circ_2 \mathbf B_R)\circ_1 \mathbf B_L)\circ_r \mathbf 2^x
=
\bigl((\mathbf 2^{i}\circ_2 \mathbf B_R)\circ_{s+1}\mathbf 2^x\bigr)\circ_1 \mathbf B_L.
\]
Now \(2\le s+1<2+|\mathbf B_R|\), so \emph{(assoc1)} yields
\[
(\mathbf 2^{i}\circ_2 \mathbf B_R)\circ_{s+1}\mathbf 2^x
=
\mathbf 2^{i}\circ_2(\mathbf B_R\circ_s \mathbf 2^x).
\]
Set
\[
\mathbf A_L:=\mathbf B_L,
\qquad
\mathbf A_R:=\mathbf B_R\circ_s \mathbf 2^x.
\]
Then \(\mathbf A_L\) and \(\mathbf A_R\) are \(l\)-factors, and
\[
\operatorname{ind}(\mathbf A_L)=\{y\in\operatorname{ind}(\mathbf A):y<i\},
\qquad
\operatorname{ind}(\mathbf A_R)=\{y\in\operatorname{ind}(\mathbf A):y>i\}.
\]
Thus
\[
\mathbf A =_{\mathcal I} (\mathbf 2^{i}\circ_2 \mathbf A_R)\circ_1 \mathbf A_L.
\]
Again, if either
\[
\mathbf B =_{\mathcal I} \mathbf 2^{i}\circ_2 \mathbf B_R \qquad \text{or} \qquad \mathbf B =_{\mathcal I} \mathbf 2^{i}\circ_1 \mathbf B_L,
\]
the proofs are analogous, with the last case \(\mathbf B=\mathbf 2^{i}\) being trivial.

\end{proof}

\subsection{Soundness of the axioms under evaluation}\label{subsec:soundness-eval}

Let
\[
\Y=\bigcup_{n\ge 0}\Y_n.
\]
We define an evaluation map
\[
\eval:\mathcal L^I\to\Y
\]
by forgetting the indices on the generators and interpreting each \(\mathbf 2^k\) as the $2-$corolla, and each partial composition \(\circ_i\) as grafting at the \(i\)-th leaf. Thus \(\eval(\mathbf A)\) is the binary tree determined by the term \(\mathbf A\). We use the same symbol \(=_{\mathcal I}\) for the congruence on \(\mathcal L^I\)
generated by the same axiom schemes \emph{(assoc1)} and \emph{(assoc2)}.

\begin{proposition}\label{prop:eval-congruence}
If \(t =_{\mathcal I} t'\), then \(\eval(t)=\eval(t')\). Equivalently, each axiom scheme \emph{(assoc1)} and \emph{(assoc2)} preserves the evaluated binary tree.
\end{proposition}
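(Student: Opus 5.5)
Since \(=_{\mathcal I}\) is by definition the least congruence on \(\mathcal L^I\) containing all instances of \emph{(assoc1)} and \emph{(assoc2)}, the plan is to show that the kernel relation \(t\approx t' :\Leftrightarrow \eval(t)=\eval(t')\) is itself a congruence containing these instances; minimality then gives \(=_{\mathcal I}\subseteq\approx\). Reflexivity, symmetry and transitivity of \(\approx\) are immediate. Compatibility with the operations also holds: \(\eval(A\circ_n B)\) is defined as the grafting of \(\eval(B)\) onto the \(n\)-th leaf of \(\eval(A)\). This depends only on \(\eval(A)\), \(\eval(B)\) and \(n\). Moreover, arity is determined by the evaluated tree (\(|A|\) equals the number of leaves of \(\eval(A)\), by induction on terms). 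So replacing a subterm by an \(\approx\)-equivalent one keeps the expression well formed and does not change its value. This reduces the proposition to its second formulation: each axiom instance has both sides with the same evaluation.

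For the axioms I fix trees \(a=\eval(A)\), \(b=\eval(B)\), \(c=\eval(C)\) and number the leaves of \(a\) as \(1,\dots,|A|\) from left to right. The key bookkeeping fact is how leaf positions shift after a graft. In \(a\circ_n b\) the leaves are, in order: leaves \(1,\dots,n-1\) of \(a\); then the \(|B|\) leaves of \(b\), occupying positions \(n,\dots,n+|B|-1\); then leaves \(n+1,\dots,|A|\) of \(a\), occupying positions \(n+|B|,\dots,|A|+|B|-1\). This follows directly from the planar, left-to-right description of grafting.

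\emph{(assoc1)}: if \(n\le m<n+|B|\), then leaf \(m\) of \(a\circ_n b\) is leaf \(m-n+1\) of the copy of \(b\). Grafting \(c\) there gives the same planar tree as first grafting \(c\) onto leaf \(m-n+1\) of \(b\) and then grafting the result onto leaf \(n\) of \(a\). This is associativity of tree substitution, and it is clear on the level of the underlying planar trees, since both are the union of \(a\), \(b\), \(c\) with the same identifications of roots and leaves. \emph{(assoc2)}: if \(m\ge n+|B|\), then leaf \(m\) of \(a\circ_n b\) is leaf \(m-|B|+1>n\) of \(a\). Grafting \(c\) there commutes with grafting \(b\) at leaf \(n\), because the two grafts happen at distinct leaves of \(a\). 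For the right-hand side I also check that after grafting \(c\) at leaf \(m-|B|+1>n\), leaf \(n\) of \(a\) keeps position \(n\), since it lies to its left. Hence both sides describe \(a\) with \(b\) at its leaf \(n\) and \(c\) at its leaf \(m-|B|+1\).

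The only real obstacle is making the leaf-position shift lemma precise rather than pictorial. I would state it as a lemma, proved by viewing a binary tree as a planar tree with a linearly ordered leaf set and grafting as replacing a leaf in that order by an ordered block. Once that is in place, both axiom checks are index arithmetic of the kind done above.
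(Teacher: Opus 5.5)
Your proposal is correct and takes the same route as the paper: you verify that each instance of \emph{(assoc1)} and \emph{(assoc2)} yields the same grafted tree on both sides, and then use the fact that \(\eval\) is compatible with partial composition to conclude that \(\eval\) is constant on the generated congruence. Your explicit leaf-position bookkeeping, together with your remark that \(|A|\) equals the number of leaves of \(\eval(A)\) (so substitution keeps terms well formed), makes precise what the paper only states pictorially.
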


\begin{proof}
By definition of \(\eval\), the term
\((\mathbf A\circ_n \mathbf B)\circ_m \mathbf C\) is evaluated by first
grafting the tree \(\eval(\mathbf B)\) onto the \(n\)-th leaf of
\(\eval(\mathbf A)\), and then grafting \(\eval(\mathbf C)\) onto the
appropriate leaf of the resulting tree. In axiom \emph{(assoc1)}, the second grafting lands inside the subtree
contributed by \(\mathbf B\). Thus the two sides describe the same iterated
grafting and evaluate to the same binary tree. In axiom \emph{(assoc2)}, the graftings of \(\mathbf B\) and \(\mathbf C\)
occur at disjoint leaves of \(\eval(\mathbf A)\). Therefore the order in which
these two graftings are performed does not affect the resulting binary tree,
and the two sides again have the same evaluation.

Since \(=_{\mathcal I}\) is the congruence generated by these axioms, and since
\(\eval\) is compatible with partial composition, \(\eval\) is constant on
\(\mathcal I\)-equivalence classes.
\end{proof}

Do\v{s}en and Petri\'c~\cite{DP1} show for the unit-free calculus \(\mathcal I''\) that two terms are equal in the calculus \emph{if and only if} they determine the same binary tree. In the present paper, only the soundness direction of Proposition~\ref{prop:eval-congruence} is needed.

\section{The Maps as Syntactic Projections}

We now identify Tonks' map and the Loday--Ronco map with the two syntactic
encodings defined below.

\subsection{Encoding Permutations}

We define two encodings of permutations in the indexed language \(\mathcal L^I\). The first is the head-insertion encoding, and the second is the decreasing encoding.

\begin{definition}\label{def:head-insertion}
Let \(a=a_1\cdots a_n\) be a word of distinct positive integers. We define
\(h(a)\in\mathcal L^I\) recursively. If \(n=1\), then
    \[
    h(a_1)=\mathbf 2^{a_1}.
    \]
If $n>1$, let \(a'=a_1\cdots a_{n-1}\). Then
    \[
    h(a)=h(a')\circ_r \mathbf 2^{a_n},
    \]
    where
    \[
    r=1+\bigl|\{\,a_j \text{ occurring in } a' : a_j<a_n\,\}\bigr|.
    \]
\end{definition} 
In particular, if \(\pi\in\Sym_n\), then \(h(\pi)\) is obtained by applying this
construction to \(\pi\) viewed as a word on $[n]$. The recursive definition of \(h\) immediately gives the following fact.

\begin{lemma}\label{lem:h-is-lfactor}
For every word $a=a_1\cdots a_n$ of distinct positive integers, the term \(h(a)\) is an \(l\)-factor. Moreover,
\[
\operatorname{ind}(h(a))=\{a_1,\dots,a_n\},
\qquad
\operatorname{root}(h(a))=a_1.
\]
\end{lemma}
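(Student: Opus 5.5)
The natural approach is induction on the length $n$ of the word $a$, following the recursive structure of Definition~\ref{def:head-insertion} step by step alongside Definition~\ref{def:l-factor}. We prove all three assertions together, since the inductive step for the $l$-factor property needs to know the index set of the shorter term.

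For $n=1$ we have $h(a_1)=\mathbf 2^{a_1}$. By the first clause of Definition~\ref{def:l-factor} this is an $l$-factor. Its index set is $\{a_1\}$, and by definition $\operatorname{root}(\mathbf 2^{a_1})=a_1$.

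For $n>1$, write $a'=a_1\cdots a_{n-1}$. By the inductive hypothesis, $h(a')$ is an $l$-factor with $\operatorname{ind}(h(a'))=\{a_1,\dots,a_{n-1}\}$ and $\operatorname{root}(h(a'))=a_1$. We now check the side conditions of the second clause of Definition~\ref{def:l-factor}, with $\mathbf A=h(a')$ and $j=a_n$.
\begin{itemize}
    \item[$\cdot$] Since the letters of $a$ are distinct, $a_n\notin\operatorname{ind}(h(a'))$.
    \item[$\cdot$] The insertion position in Definition~\ref{def:head-insertion} is $r=1+|\{a_j \text{ in } a' : a_j<a_n\}|$. Because $\operatorname{ind}(h(a'))$ is exactly the set of letters of $a'$, this equals $|\{x\in\operatorname{ind}(h(a')):x<a_n\}|+1$, which is the position Definition~\ref{def:l-factor} prescribes.
\end{itemize}
Hence $h(a)=h(a')\circ_r\mathbf 2^{a_n}$ is an $l$-factor.

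The remaining two assertions follow directly from the recursion. Composition with $\mathbf 2^{a_n}$ adds exactly the index $a_n$, so $\operatorname{ind}(h(a))=\{a_1,\dots,a_n\}$. The recursive definition of the root gives $\operatorname{root}(h(a))=\operatorname{root}(h(a'))=a_1$.

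There is essentially no obstacle here. The only point needing care is the identification of the two insertion positions, and this relies on the inductive knowledge that $\operatorname{ind}(h(a'))$ equals the set of letters of $a'$. That is why the three claims must be proved simultaneously rather than one at a time.
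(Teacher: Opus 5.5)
Your proof is correct and is essentially the paper's approach. The paper simply says the lemma follows immediately from the recursive definition of $h$. You spell out that induction by matching each step of Definition~\ref{def:head-insertion} with the second clause of Definition~\ref{def:l-factor}, using the inductive description of $\operatorname{ind}(h(a'))$ to identify the two insertion positions.
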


We now describe the second encoding.
\begin{definition}\label{def:decreasing-encoding}
Let \(a=a_1\cdots a_r\) be a nonempty word of distinct positive integers, and
let
\[
\kappa(a)=\kappa_1\cdots\kappa_r
\]
be the decreasing rearrangement of the letters of \(a\), so that
\[
\kappa_1>\kappa_2>\cdots>\kappa_r.
\]
For $x$\ occurring in $a$, let
\[
u_a(x)=
\bigl|\{\,a_j \text{ to the left of } x \text{ in } a : a_j>x\,\}\bigr|.
\]
Define \(f(a)\in\mathcal L^I\) as follows. If \(r=1\), set
\[
f(a)=\mathbf 2^{\kappa_1}.
\]
If \(r>1\), set
\[
f(a)
=
(\cdots((\mathbf 2^{\kappa_1}\circ_{m_{\kappa_2}}\mathbf 2^{\kappa_2})
\circ_{m_{\kappa_3}}\mathbf 2^{\kappa_3})\cdots)
\circ_{m_{\kappa_r}}\mathbf 2^{\kappa_r},
\]
where, for \(q=2,\dots,r\),
\[
m_{\kappa_q}=u_a(\kappa_q)+1.
\]
This is well defined: when \(\mathbf 2^{\kappa_q}\) is inserted, the term
already contains the generators whose indices are entries of \(a\)
greater than \(\kappa_q\), namely \(\kappa_1,\dots,\kappa_{q-1}\). Hence the
current arity is \(q\), while
\[
0\le u_a(\kappa_q)\le q-1.
\]
Therefore $1\le m_{\kappa_q}\le q$, so the insertion index is valid.
\end{definition}

In particular, if \(\pi\in\Sym_n\), then \(\kappa(\pi)=n(n-1)\cdots1\), and
the preceding definition gives
\[
f(\pi)
=
(\cdots((\mathbf 2^n\circ_{m_{n-1}}\mathbf 2^{n-1})
\circ_{m_{n-2}}\mathbf 2^{n-2})\cdots)\circ_{m_1}\mathbf 2^1,
\]
where
\[
m_x=u_\pi(x)+1
\qquad (1\le x<n).
\]

\begin{exm}
We illustrate the two encodings on the permutation \(\pi=312\in\Sym_3\). For the head-insertion encoding \(h\), we first compute
\[
%h(31)=\mathbf 2^3\circ_1 \mathbf 2^1,
h(312)=h(31)\circ_2\mathbf 2^2=(\mathbf 2^3\circ_1 \mathbf 2^1)\circ_2 \mathbf 2^2.
\]
%since among the letters of the word \(3\), none is smaller than \(1\), so the
%insertion index is $1+0=1$. Then among the letters of the word \(31\), exactly one is smaller than \(2\), so
%the insertion index is $1+1=2$. Hence
%\[
%h(312)=(\mathbf 2^3\circ_1 \mathbf 2^1)\circ_2 \mathbf 2^2.
%\]

To compute the decreasing encoding \(f\), we process the values in the order \(3,2,1\). Since $u_\pi(2)=1$ and $u_\pi(1)=1$ we have
\[
m_2=u_\pi(2)+1=2,
\qquad
m_1=u_\pi(1)+1=2.
\]
Hence
\[
f(312)=(\mathbf 2^3\circ_2 \mathbf 2^2)\circ_2 \mathbf 2^1.
\]
\end{exm}

\subsection{Tonks' Projection via \(h\)}

We now define a recursive insertion map on permutations and show that it agrees
with Tonks' vertex map \(\phi\). Let \(t\in\Y_1\) be the $2-$corolla. For
\(T\in\Y_p\) and \(1\le i\le p+1\), let
\[
T\triangleleft_i t \in \Y_{p+1}
\]
denote the binary tree obtained by grafting the $2-$corolla \(t\) at
the \(i\)-th leaf of \(T\).

\begin{definition}\label{def:phihat}
For each \(n\ge 0\), define $\widehat\phi:\Sym_n\to\Y_n$ recursively. Set $\widehat\phi(\emptyset)=\ast$, where \(\ast\in\Y_0\) is the unique tree with a single vertex. $\widehat\phi(\pi)=t$, if $\pi\in\Sym_1$, and
\[
\widehat\phi(\pi)=\widehat\phi(\std(\pi_2\cdots \pi_n))\triangleleft_{\pi_1} t
\]
for \(\pi=\pi_1\cdots \pi_n\in\Sym_n\) with \(n>1\).
\end{definition}

This is well defined, since \(\std(\pi_2\cdots \pi_n)\in\Sym_{n-1}\), the tree
\(\widehat\phi(\std(\pi_2\cdots \pi_n))\) lies in \(\Y_{n-1}\), and therefore
has exactly \(n\) leaves, while \(\pi_1\in[n]\).

At the level of vertices, Tonks' construction may be described (as in \cite{Ton}) by the following
evaluation procedure for a product \(x_1x_2\cdots x_{n+1}\). An ordered partition of \([n]\) records how the \(n\) binary compositions are
grouped into stages. On vertices, where the partition consists of singletons,
this procedure is determined by a permutation  $\pi=\pi_1\cdots\pi_n\in\Sym_n$, and the compositions are carried out successively in the order
\(\pi_1,\dots,\pi_n\). In particular, the first step composes the variables
\(x_{\pi_1}\) and \(x_{\pi_1+1}\). On the tree side, this corresponds to
grafting a $2-$corolla at the \(\pi_1\)-st leaf. After this first
composition, the remaining \(n-1\) steps act on a product of length \(n\), so
their relative order is encoded by the standardized tail $\std(\pi_2\cdots\pi_n)$. We record this in the following proposition.

\begin{proposition}\label{prop:phihat-equals-phi}
For every permutation \(\pi\in\Sym_n\), $\widehat\phi(\pi)=\phi(\pi)$.
\end{proposition}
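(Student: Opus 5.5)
Since \(\phi\) is defined only through the quotient \(\Sym_n/\!\sim\) and Tonks' identification of that quotient with \(\Y_n\), the plan is to pin this identification down concretely and check that \(\widehat\phi\) realizes it. The concrete identification we use is the evaluation procedure recalled just before the statement: \(\pi\) is sent to the bracketing of \(x_1\cdots x_{n+1}\) obtained by performing the binary compositions in the order \(\pi_1,\dots,\pi_n\). We prove, by induction on \(n\), that this bracketing coincides with \(\widehat\phi(\pi)\), and then check that it is constant on \(\sim\)-classes and surjective onto \(\Y_n\). Together these say that \(\widehat\phi\) factors through \(\Sym_n/\!\sim\) and induces the canonical identification, which is exactly the claim \(\widehat\phi=\phi\).

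\emph{Step 1 (recursion).} The cases \(n=0\) and \(n=1\) are immediate. For \(n>1\), performing composition \(\pi_1\) first merges \(x_{\pi_1}\) and \(x_{\pi_1+1}\) into a single factor \(y\), leaving a product of \(n\) factors. A remaining composition at original gap \(g\neq\pi_1\) corresponds to gap \(g\) if \(g<\pi_1\) and to gap \(g-1\) if \(g>\pi_1\). This relabelling is precisely standardization, so the remaining steps are governed by \(\std(\pi_2\cdots\pi_n)\). By induction they produce the tree \(\widehat\phi(\std(\pi_2\cdots\pi_n))\) on \(n\) leaves. Substituting the corolla \((x_{\pi_1}x_{\pi_1+1})\) for the \(\pi_1\)-th leaf then gives \(\widehat\phi(\std(\pi_2\cdots\pi_n))\triangleleft_{\pi_1}t\), which is Definition~\ref{def:phihat}.

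\emph{Step 2 (invariance under \(\sim\)).} Let \((\pi_i,\pi_{i+1})=(a,b)\) be a Tonks-independent pair, say \(a<b\), so some \(c\) with \(a<c<b\) is composed later. At the moment of composing \(a\) and \(b\), gap \(c\) is still unperformed. It therefore separates the factor containing gap \(a\) from the factor containing gap \(b\), and so gaps \(a\) and \(b\) act on disjoint factors. Consequently, composing \(a\) then \(b\) or \(b\) then \(a\) yields the same bracketing. This is the tree-level content of (assoc2): using Step 1, the two orders correspond to graftings of \(2\)-corollas at disjoint leaves, which commute (Proposition~\ref{prop:eval-congruence}). Iterating the swaps shows that \(\widehat\phi\) is constant on \(\sim\)-classes.

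\emph{Step 3 (the induced map is a bijection).} Surjectivity is easy: for any tree, list its internal nodes in a root-last order (for instance by leaves upward), which gives a composition order producing that tree. The main obstacle is injectivity modulo \(\sim\): any two permutations with the same tree must be linked by independent swaps. We would not prove this directly but invoke Tonks' theorem that \(|\Sym_n/\!\sim|=C_n=|\Y_n|\), so a surjection from the quotient onto \(\Y_n\) is a bijection. It remains to check that this bijection is Tonks' canonical one, i.e.\ that his identification is also given by the composition-order procedure. This is the description quoted from \cite{Ton}, which we take as given. If an independent check is wanted, one shows directly that two composition orders with the same tree are both linear extensions of the tree's node poset (children before parents). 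Any two such linear extensions are connected by adjacent swaps of incomparable nodes. Finally, incomparable adjacent nodes \(a<b\) have their lowest common ancestor \(c\), \(a<c<b\), still pending, which is exactly Tonks-independence.
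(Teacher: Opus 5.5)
Your proposal is correct. Its core, Step 1, is the same as the paper's justification: take Tonks' vertex map to be the composition-order procedure, note that the first composition grafts a \(2\)-corolla at leaf \(\pi_1\), and observe (your gap relabelling makes this explicit) that the remaining compositions are governed by \(\std(\pi_2\cdots\pi_n)\). Your Steps 2 and 3 (constancy on \(\sim\)-classes, and bijectivity via linear extensions of the node poset) are sound extra consistency checks that the paper omits; like the paper, you still ultimately rely on Tonks' description to identify the resulting bijection with his canonical one.
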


Although \(\widehat\phi\) is introduced only as an auxiliary map, it plays an
important role in what follows. It leads to an alternative recursive
description of Tonks' map, which we denote by \(\varphi\). This formulation is
better suited to the operadic arguments developed later, and in particular it
will allow us to use the normalization lemma for \(l\)-factors proved earlier
in the paper in the proof of our main result on Tonks' map.

\begin{definition}\label{def:varphi}
For each \(n\ge 0\), define \(\varphi:\Sym_n\to\Y_n\) recursively by
\[
\varphi(\emptyset)=\ast,
\]
where \(\ast\in\Y_0\) is the unique tree with a single vertex. For
\[
\pi=\pi_1\cdots\pi_n\in\Sym_n
\qquad (n>0),
\]
set
\[
\varphi(\pi)=
\varphi(\std(\pi^{<\pi_n}))\vee
\varphi(\std(\pi^{>\pi_n})),
\]
where \(\vee\) denotes grafting at a new root. If one of the two subsequences is
empty, the corresponding tree is \(\ast\in\Y_0\).
\end{definition}

\begin{theorem}\label{thm:phihat-equals-varphi}
For every permutation \(\pi\in\Sym_n\), $\widehat\phi(\pi)=\varphi(\pi)$.
\end{theorem}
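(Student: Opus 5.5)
The plan is strong induction on \(n\). The idea is that \(\widehat\phi\) satisfies the same root decomposition that defines \(\varphi\). Unwinding Definition~\ref{def:phihat}, the corolla for \(\pi_n\) is placed first and every later corolla is grafted above it, so \(\pi_n\) should label the root. The entries smaller than \(\pi_n\) should then sit in the left subtree and the larger ones in the right subtree. The cases \(n=0\) and \(n=1\) hold by definition: for \(\pi=1\) both maps give \(\ast\vee\ast=t\).

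The key auxiliary fact is a grafting rule for a tree \(T=T_1\vee T_2\) with \(T_1\in\Y_p\), so that \(T_1\) has \(p+1\) leaves. Since the leaves of \(T\) are those of \(T_1\) followed by those of \(T_2\), we have
\[
T\triangleleft_i t=
\begin{cases}
(T_1\triangleleft_i t)\vee T_2, & 1\le i\le p+1,\\
T_1\vee (T_2\triangleleft_{i-p-1} t), & i>p+1.
\end{cases}
\]
This is immediate from the definition of grafting. It can also be viewed as the evaluated form of \emph{(assoc1)} and \emph{(assoc2)}, via Proposition~\ref{prop:eval-congruence}.

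For the inductive step, let \(n>1\) and \(\sigma=\std(\pi_2\cdots\pi_n)\in\Sym_{n-1}\). By the inductive hypothesis and Definition~\ref{def:varphi},
\[
\widehat\phi(\sigma)=\varphi(\std(\sigma^{<\sigma_{n-1}}))\vee\varphi(\std(\sigma^{>\sigma_{n-1}})),
\]
and then \(\widehat\phi(\pi)=\widehat\phi(\sigma)\triangleleft_{\pi_1}t\). There are two cases.

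\emph{Case \(\pi_1<\pi_n\).} Here \(\sigma_{n-1}=\pi_n-1\), so the left factor lies in \(\Y_{\pi_n-2}\) and has \(\pi_n-1\ge\pi_1\) leaves. The grafting rule therefore puts the new corolla in the left subtree at leaf \(\pi_1\). The right factor is unchanged, because \(\std(\sigma^{>\sigma_{n-1}})=\std(\pi^{>\pi_n})\). For the left factor, set \(\tau=\std(\pi^{<\pi_n})\). Its first letter is \(\pi_1\), and its standardized tail equals \(\std(\sigma^{<\sigma_{n-1}})\). By Definition~\ref{def:phihat} and the inductive hypothesis applied twice (to \(\tau\) and to that tail, both of length \(<n\)),
\[
\varphi(\std(\sigma^{<\sigma_{n-1}}))\triangleleft_{\pi_1}t=\widehat\phi(\tau)=\varphi(\tau).
\]
This gives \(\widehat\phi(\pi)=\varphi(\pi)\).

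\emph{Case \(\pi_1>\pi_n\).} Here \(\sigma_{n-1}=\pi_n\), so the left factor has \(\pi_n\) leaves. The corolla goes into the right subtree at leaf \(\pi_1-\pi_n\). Now \(\std(\pi^{>\pi_n})\) has first letter \(\pi_1-\pi_n\), and its standardized tail is \(\std(\sigma^{>\sigma_{n-1}})\), so the same argument applies on the right.

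The main obstacle is bookkeeping rather than ideas. One must check that standardizing first and then restricting to values below or above the last letter commutes with restricting first and then standardizing. One must also check the leaf counts exactly, including the degenerate cases where \(\pi^{<\pi_n}\) or \(\pi^{>\pi_n}\) is empty or a single letter, so that the corresponding factor is \(\ast\) or the graft produces \(t\). I would verify these identities on values directly before running the induction.
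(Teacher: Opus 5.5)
Your proposal is correct and follows essentially the same route as the paper. Both arguments run induction on \(n\) with \(\sigma=\std(\pi_2\cdots\pi_n)\), split into the cases \(\pi_1<\pi_n\) and \(\pi_1>\pi_n\), count leaves to place the grafted corolla in the left or right subtree, and apply the inductive hypothesis to \(\std(\pi^{<\pi_n})\) or \(\std(\pi^{>\pi_n})\) and to its standardized tail; your explicit grafting rule for \(T_1\vee T_2\) and your check of the degenerate cases (such as \(\ast\triangleleft_1 t=t\)) spell out points the paper leaves implicit.
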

\begin{proof}
We argue by induction on \(n\). If \(n=0\), then both \(\widehat\phi(\emptyset)\) and \(\varphi(\emptyset)\) are
equal to the unique tree \(\ast\in\Y_0\). If \(n=1\), then both maps send the
unique permutation in \(\Sym_1\) to the unique tree \(t\in\Y_1\). Assume now that \(n>1\), and let
\[
\pi=\pi_1\cdots \pi_n\in\Sym_n.
\]
Set
\[
\rho=\std(\pi_2\cdots \pi_n)\in\Sym_{n-1}.
\]
By Definition~\ref{def:phihat},
\[
\widehat\phi(\pi)=\widehat\phi(\rho)\triangleleft_{\pi_1} t.
\]
By the inductive hypothesis,
\[
\widehat\phi(\rho)=\varphi(\rho).
\]
Thus
\[
\widehat\phi(\pi)=\varphi(\rho)\triangleleft_{\pi_1} t.
\]
Since \(\rho\in\Sym_{n-1}\), the recursive definition of \(\varphi\) yields
\[
\varphi(\rho)
=
\varphi(\std(\rho^{<\rho_{n-1}}))\vee
\varphi(\std(\rho^{>\rho_{n-1}})).
\]
We distinguish two cases.

\medskip
\noindent\emph{Case 1: \(\pi_1<\pi_n\).}
Then \(\rho_{n-1}=\pi_n-1\). Let
\[
L=\varphi(\std(\rho^{<\rho_{n-1}})),
\qquad
R=\varphi(\std(\rho^{>\rho_{n-1}})).
\]
Thus
\[
\varphi(\rho)=L\vee R.
\]
The entries of \(\rho\) smaller than \(\rho_{n-1}\) are exactly the standardized
images of the entries of \(\pi\) that are smaller than \(\pi_n\), except for the
first entry \(\pi_1\) itself. Hence, if
\[
\alpha=\std(\pi^{<\pi_n}),
\]
then \(\alpha_1=\pi_1\) and
\[
\std(\alpha_2\cdots\alpha_{\pi_n-1})=\std(\rho^{<\rho_{n-1}}).
\]
By Definition~\ref{def:phihat} and the inductive hypothesis,
\[
L\triangleleft_{\pi_1} t
=
\widehat\phi(\alpha)
=
\varphi(\alpha).
\]
Moreover, the entries of \(\rho\) greater than \(\rho_{n-1}\) are exactly the
standardized images of the entries of \(\pi\) greater than \(\pi_n\), so
\[
R=\varphi(\std(\pi^{>\pi_n})).
\]
Since \(L\in\Y_{\pi_n-2}\), the tree \(L\) has exactly \(\pi_n-1\) leaves.
As \(\pi_1<\pi_n\), the insertion at the global leaf \(\pi_1\) of \(L\vee R\)
therefore occurs inside the left subtree. Therefore
\[
(L\vee R)\triangleleft_{\pi_1} t
=
(L\triangleleft_{\pi_1} t)\vee R.
\]
Hence
\[
\widehat\phi(\pi)
=
\varphi(\std(\pi^{<\pi_n}))\vee \varphi(\std(\pi^{>\pi_n}))
=
\varphi(\pi).
\]

\medskip
\noindent\emph{Case 2: \(\pi_1>\pi_n\).}
Then \(\rho_{n-1}=\pi_n\). Let
\[
L=\varphi(\std(\rho^{<\rho_{n-1}})),
\qquad
R=\varphi(\std(\rho^{>\rho_{n-1}})).
\]
In this case
\[
L=\varphi(\std(\pi^{<\pi_n})).
\]
Let
\[
\beta=\std(\pi^{>\pi_n}).
\]
Then the first entry of \(\beta\) is \(\pi_1-\pi_n\), and the standardized tail
of \(\beta\) is \(\std(\rho^{>\rho_{n-1}})\). Hence, by
Definition~\ref{def:phihat} and the inductive hypothesis,
\[
R\triangleleft_{\pi_1-\pi_n}t
=
\widehat\phi(\beta)
=
\varphi(\beta).
\]
Since \(L\) has exactly \(\pi_n\) leaves, insertion at the global leaf
\(\pi_1\) of \(L\vee R\) occurs in the right subtree, at leaf
\(\pi_1-\pi_n\). Therefore
\[
(L\vee R)\triangleleft_{\pi_1}t
=
L\vee(R\triangleleft_{\pi_1-\pi_n}t).
\]
It follows that
\[
\widehat\phi(\pi)
=
\varphi(\std(\pi^{<\pi_n}))\vee
\varphi(\std(\pi^{>\pi_n}))
=
\varphi(\pi).
\]
\end{proof}

We now specialize Lemma~\ref{lem:normalization} to the head-insertion
encoding. The general lemma gives a root decomposition for every \(l\)-factor.
For terms of the form \(h(a)\), the additional point is that the indices of left and right terms in the decomposition
inherit the order of the corresponding subwords of \(a\).

\begin{lemma}\label{lem:h-word-decomposition}
Let \(a=a_1\cdots a_n\) be a nonempty word of distinct positive integers, let
\(i=a_1\), and let \(a^{<i}\) and \(a^{>i}\) denote the subwords of \(a\)
consisting of the entries less than \(i\) and greater than \(i\), respectively.
Then $h(a)$ is equivalent modulo ${\mathcal I}$ to a term in one of the following forms:
\begin{itemize}
    \item [$\cdot$] if \(a^{<i}, a^{>i}\neq\emptyset\), then $h(a)=_{\mathcal I}(\mathbf 2^i\circ_2 h(a^{>i}))\circ_1 h(a^{<i})$;
    \item [$\cdot$] if \(a^{<i}=\emptyset\), then $h(a)=_{\mathcal I}\mathbf 2^i\circ_2 h(a^{>i})$;
    \item [$\cdot$] if \(a^{>i}=\emptyset\), then $h(a)=_{\mathcal I}\mathbf 2^i\circ_1 h(a^{<i})$;
    \item [$\cdot$] if \(a=i\), then $h(a)=\mathbf 2^i$.
\end{itemize}
\end{lemma}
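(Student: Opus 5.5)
The plan is to redo the induction from the proof of Lemma~\ref{lem:normalization}, specialized to \(h\). The point is that Lemma~\ref{lem:normalization} alone only provides \emph{some} \(l\)-factors \(\mathbf A_L,\mathbf A_R\) with the right index sets. The statement instead asserts that one may take exactly \(\mathbf A_L=h(a^{<i})\) and \(\mathbf A_R=h(a^{>i})\). Different \(l\)-factors on the same index set need not be \(\mathcal I\)-equivalent, so the identification must be tracked through the recursion rather than invoked afterwards. I will therefore argue by induction on \(n\), with the statement of the lemma itself as the inductive hypothesis.

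For \(n=1\) we have \(h(a)=\mathbf 2^{i}\) and there is nothing to prove. For \(n>1\), write \(a'=a_1\cdots a_{n-1}\) and \(x=a_n\), so that \(h(a)=h(a')\circ_r\mathbf 2^{x}\) with \(r=1+|\{y\in a':y<x\}|\), by Definition~\ref{def:head-insertion}. The first letter of \(a'\) is still \(i\), so the inductive hypothesis gives one of the four normal forms for \(h(a')\), in terms of \(h(a'^{<i})\) and \(h(a'^{>i})\). Note that \(x\neq i\).

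\emph{Case \(x<i\).} Here \(a^{<i}=a'^{<i}x\) and \(a^{>i}=a'^{>i}\). The key observation is that all letters of \(a'\) smaller than \(x\) lie in \(a'^{<i}\). Hence \(r\) coincides with the insertion index that Definition~\ref{def:head-insertion} prescribes for appending \(x\) to the word \(a'^{<i}\), that is,
\[
h(a'^{<i})\circ_r\mathbf 2^{x}=h(a^{<i}).
\]
With \(h(a')\) in the form \((\mathbf 2^i\circ_2 h(a'^{>i}))\circ_1 h(a'^{<i})\), one application of (assoc1) moves \(\circ_r\mathbf 2^x\) inside, which is valid since \(1\le r\le |h(a'^{<i})|\). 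This yields \((\mathbf 2^i\circ_2 h(a^{>i}))\circ_1 h(a^{<i})\). The subcases where \(a'^{<i}\) is empty, so that \(r=1\) and \(h(a^{<i})=\mathbf 2^x\), are handled the same way. For instance, \((\mathbf 2^i\circ_2 h(a'^{>i}))\circ_1\mathbf 2^x\) is already the desired form. If \(h(a')=\mathbf 2^i\), the term \(\mathbf 2^i\circ_1\mathbf 2^x\) is immediate. When the left part is empty but the right part is not, one may need (assoc2) to commute the two compositions into the order \(\circ_2\) then \(\circ_1\) displayed in the statement.

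\emph{Case \(x>i\).} Here \(a^{>i}=a'^{>i}x\), and every letter of \(a'^{<i}\), together with \(i\) itself, is smaller than \(x\). Hence
\[
r=|a'^{<i}|+1+s,\qquad s=1+|\{y\in a'^{>i}:y<x\}|,
\]
and \(s\) is precisely the index used in \(h(a^{>i})=h(a'^{>i})\circ_s\mathbf 2^x\). Also \(|h(a'^{<i})|=|a'^{<i}|+1\), so \(r=|h(a'^{<i})|+s\ge 1+|h(a'^{<i})|\). Applying (assoc2) and then (assoc1), exactly as in Case~2 of Lemma~\ref{lem:normalization}, gives
\[
(\mathbf 2^i\circ_2(h(a'^{>i})\circ_s\mathbf 2^x))\circ_1 h(a'^{<i})=(\mathbf 2^i\circ_2 h(a^{>i}))\circ_1 h(a^{<i}).
\]
The degenerate subcases, with \(a'^{>i}\) or \(a'^{<i}\) empty, are handled similarly. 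When \(a'^{>i}\) is empty we get \(s=1\), and the new right factor is \(\mathbf 2^x=h(x)\).

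The main obstacle is purely bookkeeping: verifying that the global insertion index \(r\) translates into the local index that \(h\) itself would use on the subword. This is exactly where Lemma~\ref{lem:h-is-lfactor} and the order-preservation of subwords enter. Some care is also needed in the mixed degenerate subcases, where the normal form switches type, for example from \(\mathbf 2^i\circ_2 h(a'^{>i})\) to the two-sided form. There I would use (assoc2) to reorder the compositions so that \(\circ_2\) comes first, as in the statement.
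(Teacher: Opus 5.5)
Your proposal is correct and follows essentially the paper's proof. The paper also inducts on prefixes $a_{\le t}$, with the lemma's own normal form as the hypothesis, and checks that the global insertion index of each new letter is exactly the index $h$ uses on the relevant subword. You make the underlying (assoc1)/(assoc2) steps and the bookkeeping $r=|h(a'^{<i})|+s$ explicit, where the paper leaves them implicit.

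One small correction concerns which degenerate case needs (assoc2) for reordering. It is the switch from $\mathbf 2^i\circ_1 h(a'^{<i})$ when $x>i$, which gives $(\mathbf 2^i\circ_2\mathbf 2^x)\circ_1 h(a'^{<i})$. The switch from $\mathbf 2^i\circ_2 h(a'^{>i})$ when $x<i$ already comes out in the displayed order, as you note yourself.
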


\begin{proof}
By Lemma~\ref{lem:h-is-lfactor}, \(h(a)\) is an \(l\)-factor and $i=\operatorname{root}(h(a))=a_1$. Lemma~\ref{lem:normalization} gives a root decomposition of \(h(a)\)
\[
h(a)=_{\mathcal I}(\mathbf 2^i\circ_2 \mathbf A_R)\circ_1 \mathbf A_L,
\]
with the evident one-sided variants, where
\[
\operatorname{ind}(\mathbf A_L)=\{x\in\operatorname{ind}(h(a)):x<i\},
\qquad
\operatorname{ind}(\mathbf A_R)=\{x\in\operatorname{ind}(h(a)):x>i\}.
\]
It remains only to check that, for the particular term \(h(a)\), the indices
appearing in \(\mathbf A_L\) and \(\mathbf A_R\) occur in the order inherited from the word
\(a\). We argue by induction on the length of prefixes of \(a\). For \(1\leq t\leq n\), recall that we have defined $a_{\leq t}=a_1\cdots a_t$.

We claim that \(h(a_{\leq t})\) satisfies the decomposition stated in the lemma,
with \(a_{\leq t}^{<i}\) and \(a_{\leq t}^{>i}\) in place of \(a^{<i}\) and
\(a^{>i}\), respectively. The case \(t=1\) is immediate. Assume the claim for \(a_{\leq t}\), and write $a_{\leq t+1}=a_{\leq t}x$.

If \(x<i\), then
\[
a_{\leq t+1}^{<i}=a_{\leq t}^{<i}x,
\qquad
a_{\leq t+1}^{>i}=a_{\leq t}^{>i}.
\]
The insertion position of \(x\) in \(h(a_{\leq t+1})\) is $1+\bigl|\{y\in a_{\leq t}:y<x\}\bigr|$. Since every entry of \(a_{\leq t}^{>i}\) is greater than \(x\), this is exactly
the insertion position of \(x\) in \(h(a_{\leq t}^{<i})\). Hence the right-hand
term in the root decomposition is unchanged, while the left-hand term becomes
\[
h(a_{\leq t}^{<i}x)=h(a_{\leq t+1}^{<i}).
\]

The case \(x>i\) is analogous: the left-hand term is unchanged, and the insertion
position of \(x\) in the right-hand term is precisely the position prescribed in
the construction of
\[
h(a_{\leq t}^{>i}x)=h(a_{\leq t+1}^{>i}).
\]

Taking \(t=n\), the induction gives
\[
h(a)=_{\mathcal I}
(\mathbf 2^i\circ_2 h(a^{>i}))\circ_1 h(a^{<i})
\]
when both subwords are nonempty, and the corresponding one-sided
congruences when one of them is empty. These are precisely the four cases stated
in the lemma.
\end{proof}

We now show that the grafting map \(\varphi\) is realized by the syntactic
encoding \(h\) applied to the reversed permutation. For the rest of the paper, for every nonempty word \(a\) of distinct positive integers, write \(g(a)=\eval(h(w(a)))\).

\begin{theorem}\label{thm:varphi-via-h}
For every \(n\ge 1\) and every permutation \(\pi\in\Sym_n\), $\varphi(\pi)=g(\pi)$.
\end{theorem}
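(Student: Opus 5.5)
We argue by strong induction on \(n\). To make the induction go through, we first extend the statement from permutations to arbitrary nonempty words \(a\) of distinct positive integers. The extended claim is \(\varphi(\std(a))=g(a)\).

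The first preliminary step is to check that \(g\) depends only on the relative order of the letters, that is, \(g(a)=g(\std(a))\). This follows directly from Definition~\ref{def:head-insertion}. The insertion position \(r\) at each step counts only how many earlier letters are smaller than the new letter, so the shape of the term \(h(w(a))\) is unchanged when the indices are relabelled order-preservingly. Since \(\eval\) forgets indices, the evaluated trees agree. We also note that \(w(a)^{<x}=w(a^{<x})\) and \(w(a)^{>x}=w(a^{>x})\): reversal commutes with taking these subsequences, because the subsequences inherit the left-to-right order.

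For the main step, take \(\pi\in\Sym_n\) with \(n\ge 1\) and set \(i=\pi_n\), which is the first letter of \(w(\pi)\). Apply Lemma~\ref{lem:h-word-decomposition} to the word \(w(\pi)\). In the two-sided case this gives
\[
h(w(\pi)) =_{\mathcal I} (\mathbf 2^i\circ_2 h(w(\pi^{>i})))\circ_1 h(w(\pi^{<i})).
\]
By Proposition~\ref{prop:eval-congruence}, \(\eval\) is constant on \(\mathcal I\)-classes, so we may evaluate the right-hand side instead. Grafting \(T_R\) at leaf \(2\) of the corolla and then \(T_L\) at leaf \(1\) produces \(T_L\vee T_R\). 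Hence
\[
g(\pi)=g(\pi^{<i})\vee g(\pi^{>i}).
\]
Combining the standardization invariance with the inductive hypothesis for the shorter permutations \(\std(\pi^{<i})\) and \(\std(\pi^{>i})\), this becomes
\[
g(\pi)=\varphi(\std(\pi^{<\pi_n}))\vee\varphi(\std(\pi^{>\pi_n})),
\]
which is \(\varphi(\pi)\) by Definition~\ref{def:varphi}.

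The remaining cases need separate handling. In the one-sided cases, the term \(\mathbf 2^i\circ_2 h(\cdot)\) evaluates to \(\ast\vee T\), and \(\mathbf 2^i\circ_1 h(\cdot)\) evaluates to \(T\vee\ast\). These match \(\varphi\), which uses \(\ast\) for an empty subsequence. The base case \(n=1\) is immediate, since both sides equal the corolla \(t\).

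The main obstacle is the bookkeeping around \(\eval\). One must confirm that evaluating \((\mathbf 2\circ_2 B)\circ_1 C\) really gives \(\eval(C)\vee\eval(B)\), with the left and right subtrees in the correct order. One must also state cleanly that \(h\) commutes with order-preserving relabelling of indices. Both facts are routine inductions on the recursive definitions and will be checked explicitly.
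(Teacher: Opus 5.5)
Your proposal is correct and follows essentially the same route as the paper. You apply Lemma~\ref{lem:h-word-decomposition} to \(w(\pi)\), evaluate via Proposition~\ref{prop:eval-congruence} to get \(g(\pi)=g(\pi^{<\pi_n})\vee g(\pi^{>\pi_n})\), and close the induction using \(g(a)=g(\std(a))\). Your extension of the claim to arbitrary words only repackages that same standardization-invariance step, which the paper uses in exactly the same way.
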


\begin{proof}
We argue by induction on \(n\). If \(n=1\), then \(\pi=1\), \(w(\pi)=1\), and
\(h(w(\pi))=\mathbf 2^1\). Hence \(g(\pi)\) is the $2-$corolla, which is
exactly \(\varphi(1)\).

Assume now that the statement holds for all permutations of size \(<n\), and let $\pi=\pi_1\cdots\pi_n\in\Sym_n$. Set
\[
a=w(\pi)=\pi_n\pi_{n-1}\cdots\pi_1.
\]
The first letter of \(a\) is \(\pi_n\). Moreover, the subword of \(a\) consisting
of letters smaller than \(\pi_n\) is \(w(\pi^{<\pi_n})\), and the subword
consisting of letters greater than \(\pi_n\) is \(w(\pi^{>\pi_n})\).

Suppose first that both \(\pi^{<\pi_n}\) and \(\pi^{>\pi_n}\) are nonempty. By
Lemma~\ref{lem:h-word-decomposition},
\[
h(w(\pi))
=_{\mathcal I}
\bigl(\mathbf 2^{\pi_n}\circ_2 h(w(\pi^{>\pi_n}))\bigr)
\circ_1 h(w(\pi^{<\pi_n})).
\]
Applying \(\eval\) and using Proposition~\ref{prop:eval-congruence}, we obtain
\[
g(\pi)
=
g(\pi^{<\pi_n})\vee g(\pi^{>\pi_n}).
\]
Standardization preserves the relative order of a word. Since the insertion
positions in \(h\) depend only on this relative order, \(h(w(a))\) and
\(h(w(\std(a)))\) have the same term shape and differ only in their
indices; after applying \(\eval\), these indices are forgotten.
Therefore
\[
g(\pi^{<\pi_n})=g(\std(\pi^{<\pi_n})),
\qquad
g(\pi^{>\pi_n})=g(\std(\pi^{>\pi_n})).
\]
By the inductive hypothesis,
\[
g(\std(\pi^{<\pi_n}))
=
\varphi(\std(\pi^{<\pi_n})),
\qquad
g(\std(\pi^{>\pi_n}))
=
\varphi(\std(\pi^{>\pi_n})).
\]
Hence
\[
g(\pi)
=
\varphi(\std(\pi^{<\pi_n}))\vee
\varphi(\std(\pi^{>\pi_n}))
=
\varphi(\pi).
\]

If \(\pi^{<\pi_n}=\emptyset\), then Lemma~\ref{lem:h-word-decomposition} gives
\[
h(w(\pi))
=_{\mathcal I}
\mathbf 2^{\pi_n}\circ_2 h(w(\pi^{>\pi_n})).
\]
After applying \(\eval\), this says
\[
g(\pi)=\ast\vee g(\pi^{>\pi_n}).
\]
By standardization and the inductive hypothesis,
\[
g(\pi^{>\pi_n})
=
g(\std(\pi^{>\pi_n}))
=
\varphi(\std(\pi^{>\pi_n})).
\]
Thus
\[
g(\pi)
=
\ast\vee\varphi(\std(\pi^{>\pi_n}))
=
\varphi(\pi).
\]

The case \(\pi^{>\pi_n}=\emptyset\) is analogous. Then
\[
h(w(\pi))
=_{\mathcal I}
\mathbf 2^{\pi_n}\circ_1 h(w(\pi^{<\pi_n})),
\]
and hence
\[
g(\pi)
=
\varphi(\std(\pi^{<\pi_n}))\vee\ast
=
\varphi(\pi).
\]

This completes the induction.
\end{proof}
\begin{corollary}\label{cor:phi-via-h}
For every \(n\ge 1\) and every permutation \(\pi\in\Sym_n\), $\phi(\pi)=g(\pi)$.
\end{corollary}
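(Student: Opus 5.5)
The corollary follows by chaining the identifications already established. By Proposition~\ref{prop:phihat-equals-phi}, Tonks' vertex map agrees with the auxiliary insertion map, so $\phi(\pi)=\widehat\phi(\pi)$ for every $\pi\in\Sym_n$. By Theorem~\ref{thm:phihat-equals-varphi}, $\widehat\phi(\pi)=\varphi(\pi)$. By Theorem~\ref{thm:varphi-via-h}, $\varphi(\pi)=g(\pi)=\eval(h(w(\pi)))$ for $n\ge 1$. Composing these three equalities gives
\[
\phi(\pi)=\widehat\phi(\pi)=\varphi(\pi)=g(\pi),
\]
which is the claim.

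The only point to check is that the hypotheses match at each link. All three results are stated for arbitrary $\pi\in\Sym_n$, and the restriction $n\ge 1$ in the corollary is exactly the range in which $g$ is defined. The hypothesis $n\ge 1$ is needed because $h$ is only defined on nonempty words. No separate induction is required, since each link already carries its own.

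The one substantive dependency is Proposition~\ref{prop:phihat-equals-phi}, which identifies Tonks' quotient by $\sim$ with the recursive insertion $\widehat\phi$. The paper justifies this through Tonks' description of the vertex map as successive binary compositions in the order $\pi_1,\dots,\pi_n$. Since we may assume it as stated, the corollary itself presents no obstacle.

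Had that proposition not been available, the main work would be the following. First, $\varphi$ is constant on $\sim$-classes: swapping a Tonks-independent adjacent pair $(a,b)$ leaves $\pi_n$ fixed. Moreover, some value lies strictly between $a$ and $b$ further to the right, so tracking recursively which last entry separates $a$ from $b$ shows that the two entries always end up in different subwords $\pi^{<\pi_n}$, $\pi^{>\pi_n}$, where their relative order is irrelevant. Second, one would compare class counts with the Catalan number $C_n$ to obtain a bijection.
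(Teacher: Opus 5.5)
Your argument is correct and is essentially the paper's own proof: chain Proposition~\ref{prop:phihat-equals-phi}, Theorem~\ref{thm:phihat-equals-varphi} and Theorem~\ref{thm:varphi-via-h}. Your closing paragraph is not needed; note also that a Catalan count there would only show that the fibers of $\varphi$ are the $\sim$-classes, not that $\varphi$ realizes Tonks' particular identification of $\Sym_n/\!\sim$ with $\Y_n$.
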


\begin{proof}
By Proposition~\ref{prop:phihat-equals-phi} and
Theorem~\ref{thm:phihat-equals-varphi}, we have $\phi(\pi)=\varphi(\pi)$.
The conclusion now follows from Theorem~\ref{thm:varphi-via-h}.
\end{proof}

\subsection{The Loday--Ronco Map via \(f\)}

We now relate the Loday--Ronco map \(\psi\) to the decreasing encoding \(f\). Recall that \(\psi\) is defined recursively by splitting a permutation \(\pi\) at its maximal entry \(n\). Thus, if viewed as a word on $[n]$ we have that $\pi=\pi_L\,n\,\pi_R$, and then
\[
\psi(\pi)=\psi(\std(\pi_L))\vee\psi(\std(\pi_R)).
\]
To identify \(\psi\) with the evaluation of \(f\), we first prove a normalization lemma for the decreasing encoding, analogous to Lemma~\ref{lem:normalization}.

\begin{lemma}\label{lem:f-word-normalization}
Let \(a=a_1\cdots a_r\) be a nonempty word of distinct positive integers, and
let \(\kappa(a)=\kappa_1\cdots\kappa_r\) be the decreasing rearrangement of the
letters of \(a\) and write
\[
a=a_L\,\kappa_1\,a_R,
\]
where \(a_L\) and \(a_R\) are the subwords to the left and to the right of
\(\kappa_1\), respectively. 
Let $\mathbf B_L =_{\mathcal I} f(a_L)$ and $\mathbf B_R =_{\mathcal I} f(a_R)$. Then, $f(a)$ is equivalent modulo ${\mathcal I}$ to a term in one of the following forms:
\begin{itemize}
    \item [$\cdot$] if $a_L,a_R\neq\emptyset$, then $f(a)=_{\mathcal I}(\mathbf 2^{\kappa_1}\circ_2 \mathbf B_R)\circ_1 \mathbf B_L$;
    \item [$\cdot$] if \(a_L=\emptyset\), then $f(a)=_{\mathcal I}\mathbf 2^{\kappa_1}\circ_2 \mathbf B_R$;
    \item [$\cdot$] if \(a_R=\emptyset\), then $f(a)=_{\mathcal I}\mathbf 2^{\kappa_1}\circ_1 \mathbf B_L$;
    \item [$\cdot$] if \(a=\kappa_1\), then \(f(a)=\mathbf 2^{\kappa_1}\).
\end{itemize}
\end{lemma}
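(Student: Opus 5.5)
We prove the lemma by induction on the number of generators that follow \(\mathbf 2^{\kappa_1}\) in \(f(a)\). In the definition, \(f(a)\) is built by inserting \(\mathbf 2^{\kappa_2},\dots,\mathbf 2^{\kappa_r}\) one after another. For \(1\le q\le r\), let \(a^{(q)}\) be the subword of \(a\) consisting of the letters \(\kappa_1,\dots,\kappa_q\), in their original order. We claim that the partial term
\[
F_q=(\cdots(\mathbf 2^{\kappa_1}\circ_{m_{\kappa_2}}\mathbf 2^{\kappa_2})\cdots)\circ_{m_{\kappa_q}}\mathbf 2^{\kappa_q}
\]
equals \(f(a^{(q)})\). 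This holds because \(u_a(\kappa_p)=u_{a^{(q)}}(\kappa_p)\) for \(p\le q\): the letters to the left of \(\kappa_p\) that are larger than \(\kappa_p\) all lie among \(\kappa_1,\dots,\kappa_{p-1}\). We then show that \(F_q\) satisfies the four-case decomposition of the lemma with \(a^{(q)}_L\) and \(a^{(q)}_R\) in place of \(a_L\) and \(a_R\). The case \(q=r\) is the lemma. Since \(f(a_L)\) and \(f(a_R)\) are fixed terms and \(\mathbf B_L,\mathbf B_R\) are only required to be \(\mathcal I\)-equivalent to them, it suffices to produce the decomposition with \(f(a_L)\) and \(f(a_R)\) themselves and then use the congruence property of \(=_{\mathcal I}\).

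For the inductive step, suppose \(F_q=_{\mathcal I}(\mathbf 2^{\kappa_1}\circ_2 f(a^{(q)}_R))\circ_1 f(a^{(q)}_L)\), or one of its one-sided variants. Let \(x=\kappa_{q+1}\). It is inserted at position \(m=u_a(x)+1\), and there are two cases.

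If \(x\) lies left of \(\kappa_1\) in \(a\), then every letter counted by \(u_a(x)\) lies in \(a_L\). Hence \(u_a(x)=u_{a_L}(x)\), and this is also the insertion position \(m\) that \(x\) receives when \(f(a^{(q+1)}_L)\) is built from \(f(a^{(q)}_L)\). We have \(1\le m\le |f(a^{(q)}_L)|\), so \emph{(assoc1)} applied to the \(\circ_1\) composite moves the insertion inside the left factor. This turns it into \(f(a^{(q)}_L)\circ_m\mathbf 2^x=f(a^{(q+1)}_L)\).

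If \(x\) lies right of \(\kappa_1\), then \(\kappa_1\) and all letters of \(a^{(q)}_L\) are to the left of \(x\) and larger than \(x\). So \(u_a(x)=|a^{(q)}_L|+1+u_{a_R}(x)\), and therefore \(m=|f(a^{(q)}_L)|+s+1\), where \(s=u_{a_R}(x)+1\) and \(|f(a^{(q)}_L)|=|a^{(q)}_L|+1\). We first apply \emph{(assoc2)}, which is allowed because \(m\ge 1+|f(a^{(q)}_L)|\). It swaps the insertion past the \(\circ_1 f(a^{(q)}_L)\) and turns it into \(\circ_{s+1}\) on \(\mathbf 2^{\kappa_1}\circ_2 f(a^{(q)}_R)\). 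Then \emph{(assoc1)}, valid since \(2\le s+1<2+|f(a^{(q)}_R)|\), pushes it inside to give \(f(a^{(q)}_R)\circ_s\mathbf 2^x=f(a^{(q+1)}_R)\). This mirrors Case 2 of Lemma~\ref{lem:normalization}.

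The one-sided and trivial starting cases are handled the same way. When a side is still empty and \(x\) is the first letter landing there, the index check is immediate. On the left we have \(m=u_a(x)+1=1\), because no letter greater than \(x\) precedes \(x\) in \(a_L\): it is the largest so far among those and no other letter of \(a^{(q)}\) is to its left. The step \(\mathbf 2^{\kappa_1}\circ_2 B\ \mapsto\ (\mathbf 2^{\kappa_1}\circ_2 B)\circ_1\mathbf 2^x\) is then already in normal form. On the right, \(m=|a^{(q)}_L|+2\), and \emph{(assoc2)} produces \((\mathbf 2^{\kappa_1}\circ_2\mathbf 2^x)\circ_1 f(a^{(q)}_L)\).

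The main obstacle is the bookkeeping identity \(u_a(x)=|a^{(q)}_L|+1+u_{a_R}(x)\) in the right-side case, together with the matching arity count. This is the one place where the decreasing order of insertion matters: every letter already present is larger than \(x\), so each letter to the left of \(x\) is counted. I would verify that identity carefully and treat everything else as a routine replay of the proof of Lemma~\ref{lem:normalization}.
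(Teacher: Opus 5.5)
Your proposal is correct and is essentially the paper's proof. The paper inducts on $r$ by deleting the smallest letter $\kappa_r$ (its $a'$ is your $a^{(r-1)}$), then uses \emph{(assoc1)} when that letter lies in $a_L$ and \emph{(assoc2)} followed by \emph{(assoc1)} when it lies in $a_R$, exactly as you do.

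There is one arithmetic slip. From $u_a(x)=|a^{(q)}_L|+1+u_{a_R}(x)$ you get $m=u_a(x)+1=|f(a^{(q)}_L)|+s$, not $|f(a^{(q)}_L)|+s+1$. It is this value that makes \emph{(assoc2)} return the index $s+1$ you go on to use, and it agrees with your own first-landing computation $m=|a^{(q)}_L|+2$.
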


\begin{proof}
We argue by induction on \(r=\operatorname{len}(a)\). If \(r=1\), then \(a=\kappa_1\) and $f(a)=\mathbf 2^{\kappa_1}$, so the claim is immediate. Assume now that \(r>1\). Let \(a'\) be the word
obtained from \(a\) by deleting its smallest letter \(\kappa_r\). Then
\[
f(a)=f(a')\circ_{u_a(\kappa_r)+1}\mathbf 2^{\kappa_r}.
\]
The maximum of \(a'\) is still \(\kappa_1\). Write
\[
a'=a'_L\,\kappa_1\,a'_R.
\]
Since each axiom scheme preserves arity, congruent terms have the same arity.

We distinguish two cases.

\medskip
\noindent\emph{Case 1: \(\kappa_r\text{ occurs in } a_L\).} Then \(a'_R=a_R\), while \(a'_L\) is obtained from \(a_L\) by deleting
\(\kappa_r\). Moreover,
\[
u_a(\kappa_r)=u_{a_L}(\kappa_r),
\]
because every letter of \(a\) that is greater than \(\kappa_r\) and lies to its
left is already contained in \(a_L\).

\smallskip
\noindent If both \(a'_L\) and \(a'_R\) are nonempty, then by the inductive
hypothesis there exist terms \(\mathbf A_L,\mathbf A_R\in\mathcal L^I\) such that
\[
\mathbf A_L =_{\mathcal I} f(a'_L),\qquad \mathbf A_R =_{\mathcal I} f(a_R),
\]
and
\[
f(a') =_{\mathcal I} (\mathbf 2^{\kappa_1}\circ_2 \mathbf A_R)\circ_1 \mathbf A_L.
\]
Since \(\mathbf A_L =_{\mathcal I} f(a'_L)\), we have
\[
|\mathbf A_L|=|f(a'_L)|=\operatorname{len}(a'_L)+1=\operatorname{len}(a_L).
\]
Hence
\[
1\le u_{a_L}(\kappa_r)+1\le |\mathbf A_L|,
\]
so \emph{(assoc1)} applies:
\[
((\mathbf 2^{\kappa_1}\circ_2 \mathbf A_R)\circ_1 \mathbf A_L)\circ_{u_a(\kappa_r)+1}\mathbf 2^{\kappa_r}
=
(\mathbf 2^{\kappa_1}\circ_2 \mathbf A_R)\circ_1
\bigl(\mathbf A_L\circ_{u_{a_L}(\kappa_r)+1}\mathbf 2^{\kappa_r}\bigr).
\]
Set
\[
\mathbf B_L:=\mathbf A_L\circ_{u_{a_L}(\kappa_r)+1}\mathbf 2^{\kappa_r},
\qquad
\mathbf B_R:=\mathbf A_R.
\]
Since \(\mathbf A_L =_{\mathcal I} f(a'_L)\), it follows from the definition of \(f(a_L)\)
that
\[
\mathbf B_L =_{\mathcal I} f(a_L).
\]
Therefore
\[
f(a)=_{\mathcal I}(\mathbf 2^{\kappa_1}\circ_2 \mathbf B_R)\circ_1 \mathbf B_L.
\]

\smallskip
\noindent If either \(a'_L=\emptyset\) and \(a'_R\neq\emptyset\), or \(a'_R=\emptyset\) and \(a'_L\neq\emptyset\), we proceed analogously, and if \(a'=\kappa_1\) this case is trivial.

\medskip
\noindent\emph{Case 2: \(\kappa_r\text{ occurs in } a_R\).} Then \(a'_L=a_L\), while \(a'_R\) is obtained from \(a_R\) by deleting
\(\kappa_r\). Put
\[
s=u_{a_R}(\kappa_r)+1.
\]
Since every letter of \(a_L\), as well as \(\kappa_1\) itself, lies to the left
of \(\kappa_r\) and is greater than \(\kappa_r\), we have
\[
u_a(\kappa_r)=\operatorname{len}(a_L)+u_{a_R}(\kappa_r)+1.
\]
Again, we prove only the main case, since the other cases are similar.
\smallskip
\noindent If both \(a'_L\) and \(a'_R\) are nonempty, then by the inductive
hypothesis there exist terms \(\mathbf A_L,\mathbf A_R\in\mathcal L^I\) such that
\[
\mathbf A_L =_{\mathcal I} f(a_L),\qquad \mathbf A_R =_{\mathcal I} f(a'_R),
\]
and
\[
f(a') =_{\mathcal I} (\mathbf 2^{\kappa_1}\circ_2 \mathbf A_R)\circ_1 \mathbf A_L.
\]
Since \(\mathbf A_L =_{\mathcal I} f(a_L)\), we have
\[
|\mathbf A_L|=|f(a_L)|=\operatorname{len}(a_L)+1.
\]
Therefore
\[
u_a(\kappa_r)+1=|\mathbf A_L|+s.
\]
Applying \emph{(assoc2)} gives
\[
((\mathbf 2^{\kappa_1}\circ_2 \mathbf A_R)\circ_1 \mathbf A_L)\circ_{u_a(\kappa_r)+1}\mathbf 2^{\kappa_r}
=
\bigl((\mathbf 2^{\kappa_1}\circ_2 \mathbf A_R)\circ_{s+1}\mathbf 2^{\kappa_r}\bigr)\circ_1 \mathbf A_L.
\]
Now \(\mathbf A_R =_{\mathcal I} f(a'_R)\), so
\[
|\mathbf A_R|=|f(a'_R)|=\operatorname{len}(a'_R)+1=\operatorname{len}(a_R).
\]
Hence
\[
2\le s+1<2+|\mathbf A_R|,
\]
and \emph{(assoc1)} yields
\[
(\mathbf 2^{\kappa_1}\circ_2 \mathbf A_R)\circ_{s+1}\mathbf 2^{\kappa_r}
=
\mathbf 2^{\kappa_1}\circ_2\bigl(\mathbf A_R\circ_s \mathbf 2^{\kappa_r}\bigr).
\]
Set
\[
\mathbf B_L:=\mathbf A_L,
\qquad
\mathbf B_R:=\mathbf A_R\circ_s \mathbf 2^{\kappa_r}.
\]
Since \(\mathbf A_R =_{\mathcal I} f(a'_R)\), it follows from the definition of \(f(a_R)\) that
\[
\mathbf B_R =_{\mathcal I} f(a_R).
\]
Therefore
\[
f(a)=_{\mathcal I}(\mathbf 2^{\kappa_1}\circ_2 \mathbf B_R)\circ_1 \mathbf B_L.
\]
\end{proof}

\begin{theorem}\label{thm:psi-via-f}
For every \(n\ge 1\) and every permutation \(\pi\in\Sym_n\), $\psi(\pi)=\eval(f(\pi))$.
\end{theorem}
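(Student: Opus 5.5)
We argue by induction on \(n\), in parallel with the proof of Theorem~\ref{thm:varphi-via-h}, with Lemma~\ref{lem:f-word-normalization} playing the role that Lemma~\ref{lem:h-word-decomposition} played there. For \(n=1\) we have \(f(1)=\mathbf 2^1\), whose evaluation is the \(2\)-corolla. This equals \(\psi(1)=\ast\vee\ast\). For general \(n\), the maximum letter of \(\pi\) is \(\kappa_1=n\), and the splitting \(\pi=\pi_L\,n\,\pi_R\) is exactly the splitting \(a=a_L\,\kappa_1\,a_R\) of Lemma~\ref{lem:f-word-normalization} with \(a=\pi\). That lemma is applied with \(\mathbf B_L=f(\pi_L)\) and \(\mathbf B_R=f(\pi_R)\), whenever these words are nonempty.

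In the case where both \(\pi_L\) and \(\pi_R\) are nonempty, Lemma~\ref{lem:f-word-normalization} gives
\[
f(\pi)=_{\mathcal I}(\mathbf 2^{n}\circ_2 f(\pi_R))\circ_1 f(\pi_L).
\]
Applying \(\eval\) and Proposition~\ref{prop:eval-congruence}, we obtain \(\eval(f(\pi))=\eval(f(\pi_L))\vee\eval(f(\pi_R))\). The reason is that grafting a tree at leaf \(2\) of the corolla and then another at leaf \(1\) produces the tree whose left and right subtrees are those two trees. The one-sided cases are identical, with \(\ast\) in place of the missing factor, and they match \(\psi(\emptyset)=\ast\).

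The remaining step is standardization invariance: we need \(\eval(f(a))=\eval(f(\std(a)))\) for any nonempty word \(a\) of distinct positive integers. This holds because the decreasing rearrangement \(\kappa(a)\) and the numbers \(u_a(x)\) depend only on the relative order of the letters of \(a\). So \(f(a)\) and \(f(\std(a))\) are the same term up to relabelling of indices, and \(\eval\) forgets indices. With this, the inductive hypothesis applied to \(\std(\pi_L)\in\Sym_{k-1}\) and \(\std(\pi_R)\in\Sym_{n-k}\) gives
\[
\eval(f(\pi))=\psi(\std(\pi_L))\vee\psi(\std(\pi_R))=\psi(\pi).
\]

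No step should be a real obstacle, since the combinatorial work is already done in Lemma~\ref{lem:f-word-normalization}. The only point needing care is confirming that the evaluation of \((\mathbf 2^{n}\circ_2 \mathbf B_R)\circ_1 \mathbf B_L\) is \(\eval(\mathbf B_L)\vee\eval(\mathbf B_R)\). Here the first leaf of \(\mathbf 2^n\circ_2\mathbf B_R\) is still the left leaf of the root corolla, so the second grafting goes into the left subtree. This is the same observation already used in Theorem~\ref{thm:varphi-via-h}.
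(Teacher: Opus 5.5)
Your proposal is correct and takes essentially the same route as the paper. Both argue by induction on \(n\), apply Lemma~\ref{lem:f-word-normalization} at the maximal entry \(n\), pass to trees via Proposition~\ref{prop:eval-congruence}, and use the invariance \(\eval(f(a))=\eval(f(\std(a)))\) before invoking the inductive hypothesis; your explicit check that \(\eval\bigl((\mathbf 2^{n}\circ_2\mathbf B_R)\circ_1\mathbf B_L\bigr)=\eval(\mathbf B_L)\vee\eval(\mathbf B_R)\) spells out a step the paper leaves implicit.
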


\begin{proof}
We argue by induction on \(n\). If \(n=1\), then \(f(1)=\mathbf 2^1\), and
\(\eval(f(1))\) is the $2-$corolla. This is exactly \(\psi(1)\). Assume now that the statement holds for all permutations of size \(<n\), and let $\pi=\pi_L\,n\,\pi_R\in\Sym_n$. Apply Lemma~\ref{lem:f-word-normalization} to the word \(a=\pi\). Since
\(\kappa_1=\max(\pi)=n\), the root generator is \(\mathbf 2^n\), while the left
and right branches are determined by the subwords \(\pi_L\) and \(\pi_R\).

If both \(\pi_L\) and \(\pi_R\) are nonempty, Lemma~\ref{lem:f-word-normalization}
gives
\[
f(\pi)=_{\mathcal I}(\mathbf 2^n\circ_2 \mathbf B_R)\circ_1 \mathbf B_L,
\]
where
\[
\mathbf B_L =_{\mathcal I} f(\pi_L),
\qquad
\mathbf B_R =_{\mathcal I} f(\pi_R).
\]
Applying \(\eval\) and using Proposition~\ref{prop:eval-congruence}, we obtain
\[
\eval(f(\pi))
=
\eval(f(\pi_L))\vee \eval(f(\pi_R)).
\]
Standardization preserves the relative order of a word. Since the insertion
positions in the decreasing encoding \(f\) depend only on this relative order,
the terms \(f(a)\) and \(f(\std(a))\) have the same shape and differ only in
their indices for every nonempty word \(a\) of distinct positive integers.
Applying \(\eval\) forgets those indices, and hence
\[
\eval(f(a))=\eval(f(\std(a))).
\]
Applying this to \(a=\pi_L\) and \(a=\pi_R\), we get
\[
\eval(f(\pi_L))=\eval(f(\std(\pi_L))),
\qquad
\eval(f(\pi_R))=\eval(f(\std(\pi_R))).
\]
By the inductive hypothesis,
\[
\eval(f(\std(\pi_L)))=\psi(\std(\pi_L)),
\qquad
\eval(f(\std(\pi_R)))=\psi(\std(\pi_R)).
\]
Therefore
\[
\eval(f(\pi))
=
\psi(\std(\pi_L))\vee \psi(\std(\pi_R))
=
\psi(\pi).
\]

If \(\pi_L=\emptyset\), then Lemma~\ref{lem:f-word-normalization} gives
\[
f(\pi)=_{\mathcal I}\mathbf 2^n\circ_2 \mathbf B_R,
\qquad
\mathbf B_R=_{\mathcal I}f(\pi_R).
\]
Hence
\[
\eval(f(\pi))
=
\psi(\emptyset)\vee \eval(f(\pi_R)).
\]
Since \(\pi_R\) is nonempty in this case, standardization and the inductive
hypothesis give
\[
\eval(f(\pi_R))
=
\eval(f(\std(\pi_R)))
=
\psi(\std(\pi_R)).
\]
Thus
\[
\eval(f(\pi))
=
\psi(\emptyset)\vee\psi(\std(\pi_R))
=
\psi(\pi).
\]

The case \(\pi_R=\emptyset\) is analogous. In that case
\[
f(\pi)=_{\mathcal I}\mathbf 2^n\circ_1 \mathbf B_L,
\qquad
\mathbf B_L=_{\mathcal I}f(\pi_L),
\]
and hence
\[
\eval(f(\pi))
=
\psi(\std(\pi_L))\vee\psi(\emptyset)
=
\psi(\pi).
\]

This completes the induction.
\end{proof}

\begin{corollary}\label{cor:phi-psi-inverse}
For every \(n\ge 0\) and every permutation \(\pi\in\Sym_n\), $\phi(\pi)=\psi(\pi^{-1})$.
\end{corollary}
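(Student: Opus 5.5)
We prove \(\phi(\pi)=\psi(\pi^{-1})\) by comparing the two recursive descriptions of the maps rather than the syntactic encodings. By Proposition~\ref{prop:phihat-equals-phi} and Theorem~\ref{thm:phihat-equals-varphi} we have \(\phi=\varphi\). The statement therefore reduces to showing \(\varphi(\pi)=\psi(\pi^{-1})\) for all \(\pi\in\Sym_n\). We argue by induction on \(n\). The cases \(n=0\) and \(n=1\) are immediate, since \(\pi^{-1}=\pi\) and both sides equal \(\ast\), respectively the \(2\)-corolla.

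For the inductive step, let \(\sigma=\pi^{-1}\) and \(k=\pi_n\). Then the maximal entry \(n\) of \(\sigma\) sits in position \(k\), so \(\sigma=\sigma_L\,n\,\sigma_R\), where \(\sigma_L=\sigma_1\cdots\sigma_{k-1}\) and \(\sigma_R=\sigma_{k+1}\cdots\sigma_n\). The key combinatorial step is the pair of identities
\[
\std(\sigma_L)=\bigl(\std(\pi^{<k})\bigr)^{-1},
\qquad
\std(\sigma_R)=\bigl(\std(\pi^{>k})\bigr)^{-1}.
\]
To check the first, note that \(\sigma_v\) for \(v<k\) is the position of the value \(v\) in \(\pi\). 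Now \(\pi^{<k}\) lists the values \(1,\dots,k-1\) in the order of their positions in \(\pi\), and \(\std(\pi^{<k})\) is a permutation of \([k-1]\) whose inverse sends \(v\) to the rank of \(\sigma_v\) among \(\{\sigma_1,\dots,\sigma_{k-1}\}\). That inverse is exactly \(\std(\sigma_L)\). The second identity is the same argument applied to the values \(k+1,\dots,n\), which occupy positions \(k+1,\dots,n\) of \(\sigma\). Note that the position \(n\) of \(k\) in \(\pi\) is excluded from both sets, consistent with \(\sigma_k=n\).

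With these identities, Definition~\ref{def:varphi}, the inductive hypothesis, and the definition of \(\psi\) give
\[
\varphi(\pi)=\varphi(\std(\pi^{<k}))\vee\varphi(\std(\pi^{>k}))
=\psi(\std(\sigma_L))\vee\psi(\std(\sigma_R))=\psi(\sigma).
\]
The empty-subword cases are covered by the convention \(\varphi(\emptyset)=\psi(\emptyset)=\ast\).

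The main obstacle is the bookkeeping in the standardization-inverse identity. One must verify carefully that restricting the inverse to a set of values, and then standardizing, commutes with inverting and standardizing the subword. Everything else is formal.

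An alternative, more syntactic route would compare \(h(w(\pi))\) with \(f(\pi^{-1})\) via Corollary~\ref{cor:phi-via-h} and Theorem~\ref{thm:psi-via-f}, using the normalization Lemmas~\ref{lem:h-word-decomposition} and~\ref{lem:f-word-normalization}. Its root splittings match: the first letter \(\pi_n\) of \(w(\pi)\) corresponds to the position of \(n\) in \(\pi^{-1}\). It would, however, require the same inverse-standardization identity, so the direct recursion above is preferable.
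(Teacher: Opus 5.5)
Your proposal is correct and follows essentially the same route as the paper: reduce to \(\varphi\) via \(\phi=\varphi\), induct on \(n\), split \(\pi^{-1}\) at position \(\pi_n\), where its maximum \(n\) sits, and use \(\std(\sigma_L)=\bigl(\std(\pi^{<\pi_n})\bigr)^{-1}\) and \(\std(\sigma_R)=\bigl(\std(\pi^{>\pi_n})\bigr)^{-1}\) together with the inductive hypothesis. Your rank-of-positions check of these two identities is somewhat more explicit than the paper's.
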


\begin{proof}
We argue by induction on \(n\). If \(n=0\), then \(\pi=\emptyset\), and both \(\phi(\emptyset)\) and
\(\psi(\emptyset^{-1})=\psi(\emptyset)\) are equal to \(\ast\in\Y_0\).
If \(n=1\), the claim is immediate. Assume now that the statement holds for all permutations of size \(<n\), and let $\pi=\pi_1\cdots \pi_n\in\Sym_n$. Set
\[
\sigma=\pi^{-1}\in\Sym_n.
\]
Since \(\sigma_{\pi_n}=n\), the recursive definition of \(\psi\) gives
\[
\psi(\sigma)=\psi(\std(\sigma_L))\vee \psi(\std(\sigma_R)),
\]
where
\[
\sigma_L=\sigma_1\cdots \sigma_{\pi_n-1},
\qquad
\sigma_R=\sigma_{\pi_n+1}\cdots \sigma_n.
\]
Now let
\[
\alpha=\pi^{<\pi_n},
\qquad
\beta=\pi^{>\pi_n}.
\]
The word \(\sigma_L\) consists of the positions, in \(\pi\), of the letters of
\(\alpha\), listed in increasing order of their values. Standardizing
\(\sigma_L\) replaces these original positions by their relative positions
among the entries of \(\alpha\). This is exactly the inverse of the
standardized word \(\std(\alpha)\). Therefore
\[
\std(\sigma_L)={\bigl(\std(\alpha)\bigr)}^{-1}.
\]
The same argument applied to the entries greater than \(\pi_n\) gives
\[
\std(\sigma_R)={\bigl(\std(\beta)\bigr)}^{-1}.
\]
By the inductive hypothesis,
\[
\psi(\std(\sigma_L))
=
\phi\bigl({\std(\sigma_L)}^{-1}\bigr)
=
\phi(\std(\alpha)),
\]
and likewise
\[
\psi(\std(\sigma_R))
=
\phi(\std(\beta)).
\]
Hence
\[
\psi(\pi^{-1})
=
\phi(\std(\pi^{<\pi_n}))\vee \phi(\std(\pi^{>\pi_n})).
\]
By the recursive definition of \(\varphi\) and the equality $\phi=\varphi$, we also have
\[
\phi(\pi)=\phi(\std(\pi^{<\pi_n}))\vee \phi(\std(\pi^{>\pi_n})).
\]
Therefore $\psi(\pi^{-1})=\phi(\pi)$, as required.
\end{proof}
\section{Order Preservation}

We show that Tonks' map \(\phi:\Sym_n\to\Y_n\) is order-preserving from the weak
Bruhat order to the Tamari order. Since the weak Bruhat order is the reflexive
transitive closure of its covering relation, it is enough to consider a cover
\[
\pi=\alpha\,u\,v\,\beta,
\qquad
\rho=\alpha\,v\,u\,\beta,
\qquad
u<v,
\]
so that \(\pi\lessdot_B\rho\). Set
\[
\tau=w(\beta),
\qquad
\gamma=w(\alpha).
\]
Then
\[
w(\pi)=\tau\,v\,u\,\gamma,
\qquad
w(\rho)=\tau\,u\,v\,\gamma.
\]
The local difference between \(w(\pi)\) and \(w(\rho)\) lies in the adjacent
block \(vu\) versus \(uv\), after the common prefix \(\tau\). Since the
insertion indices in \(h\) depend only on the letters already processed, the
prefix \(\tau\) determines the local comparison.

The case \(n=4\) is displayed in Figure~\ref{fig:tonks_map}. The red
intervals in the weak Bruhat order are precisely those collapsed by \(\phi\),
and the resulting quotient is the \(1\)-skeleton of the associahedron
\(\Asc^5\), with vertices indexed by the elements of \(\Y_4\) and ordered by the Tamari
order.

\begin{figure}[htbp]
     \centering
     \begin{subfigure}[t]{0.45\textwidth}
         \centering
         \includegraphics[width=\textwidth]{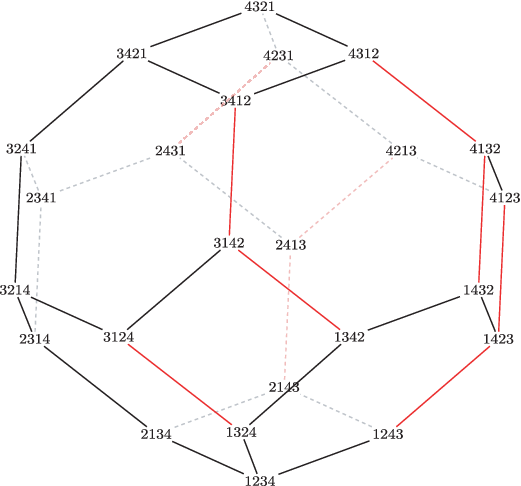}
         \caption{Weak Bruhat order on \(\Sym_4\).}\label{fig:bruhat}
     \end{subfigure}
     \hfill
     \begin{subfigure}[t]{0.45\textwidth}
         \centering
         \includegraphics[width=\textwidth]{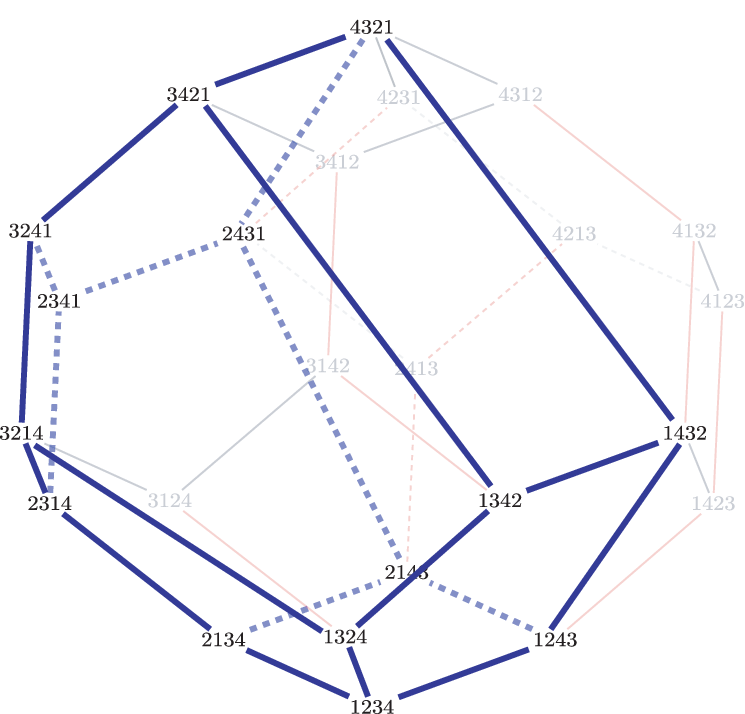}
         \caption{Induced Tamari order on the quotient \(\Sym_4/\sim\).}\label{fig:collapse}
     \end{subfigure}
        \caption{Weak Bruhat order on \(\Sym_4\) and the corresponding $1-$skeleton of the associahedron \(\Asc^5\).}\label{fig:tonks_map}
\end{figure}

\begin{lemma}\label{lem:local-indices}
Let \(\tau\) be a word of distinct integers, and let \(u<v\) be integers not occurring
in \(\tau\). Let
\[
n=k(v;\tau)
\qquad
\text{and}
\qquad
m=k(u;\tau v),
\]
where
\[
k(a;\sigma)=1+\bigl|\{x \text{ occurring in }\sigma : x<a\}\bigr|
\]
is the insertion index used in the definition of \(h\). Then
\[
n-m=\bigl|\{x \text{ occurring in }\tau : u<x<v\}\bigr|.
\]
\end{lemma}

%\begin{proof}
%By definition,
%\[
%m=1+\bigl|\{x \text{ occurring in }\tau : x<u\}\bigr|,
%\]
%since \(u<v\), and
%\[
%n=1+\bigl|\{x \text{ occurring in }\tau : x<v\}\bigr|.
%\]
%Therefore
%\[
%n-m=\bigl|\{x \text{ occurring in }\tau : u<x<v\}\bigr|.
%\]
%The three claims follow immediately.
%\end{proof}

\begin{lemma}\label{lem:independent-assoc2}
Let
\[
\pi=\alpha\,u\,v\,\beta,
\qquad
\rho=\alpha\,v\,u\,\beta,
\qquad
u<v,
\]
and set \(\tau=w(\beta)\). If \((u,v)\) is Tonks-independent in \(\pi\), then $h(w(\pi))=_{\mathcal I} h(w(\rho))$. Consequently, $\phi(\pi)=\phi(\rho)$.
\end{lemma}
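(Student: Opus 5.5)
We compare the two head-insertion terms directly. Write \(w(\pi)=\tau\,v\,u\,\gamma\) and \(w(\rho)=\tau\,u\,v\,\gamma\) with \(\gamma=w(\alpha)\). The insertion index of each letter in \(h\) depends only on the \emph{set} of letters already processed. Hence every letter of \(\gamma\) is inserted at the same index in both terms, because after the block \(vu\) or \(uv\) the processed set is \(\operatorname{ind}(\tau)\cup\{u,v\}\) in both cases. By the congruence property of \(=_{\mathcal I}\), it therefore suffices to prove
\[
h(\tau v u)=_{\mathcal I} h(\tau u v),
\]
and then append the common insertions of \(\gamma\) on both sides.

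Put \(\mathbf T=h(\tau)\), \(n=k(v;\tau)\), and \(m=k(u;\tau v)\). Since \(u<v\), we have \(m=k(u;\tau)\). Moreover \(k(v;\tau u)=n+1\), because \(u\) is processed before \(v\) and \(u<v\). So
\[
h(\tau v u)=(\mathbf T\circ_n \mathbf 2^v)\circ_m\mathbf 2^u,
\qquad
h(\tau u v)=(\mathbf T\circ_m \mathbf 2^u)\circ_{n+1}\mathbf 2^v .
\]
Tonks-independence of \((u,v)\) in \(\pi\) means that some letter of \(\beta\), that is, of \(\tau\), lies strictly between \(u\) and \(v\). By Lemma~\ref{lem:local-indices}, this gives \(n-m\ge 1\), i.e.\ \(m<n\) and so \(m+|\mathbf 2^u|=m+2\le n+1\).

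The key step is to apply \emph{(assoc2)} to the right-hand term with \(A=\mathbf T\), \(B=\mathbf 2^u\) in position \(m\), and \(C=\mathbf 2^v\) in position \(n+1\). The side condition \(m+|B|\le n+1\) has just been verified. The axiom yields
\[
(\mathbf T\circ_m \mathbf 2^u)\circ_{n+1}\mathbf 2^v=_{\mathcal I}(\mathbf T\circ_{n+1-2+1}\mathbf 2^v)\circ_m\mathbf 2^u=(\mathbf T\circ_n\mathbf 2^v)\circ_m\mathbf 2^u ,
\]
which is exactly \(h(\tau v u)\). Extending by the common insertions of \(\gamma\) gives \(h(w(\pi))=_{\mathcal I}h(w(\rho))\).

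For the consequence, apply \(\eval\) and Proposition~\ref{prop:eval-congruence} to get \(g(\pi)=g(\rho)\). Corollary~\ref{cor:phi-via-h} then gives \(\phi(\pi)=\phi(\rho)\).

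The main obstacle is index bookkeeping, which has two parts. First, one must check that \(m\) and \(n+1\) are the correct insertion positions. Second, one must confirm that the inequality strictness coming from independence is exactly what \emph{(assoc2)} requires. In the non-independent case \(n=m\), and \emph{(assoc1)} would apply instead, producing a different tree. The degenerate case \(\tau=\emptyset\) cannot occur, since independence requires \(\beta\neq\emptyset\).
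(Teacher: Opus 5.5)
Your proposal is correct and follows the paper's argument essentially step for step. It uses the same index computations $m=k(u;\tau)$ and $k(v;\tau u)=n+1$, the same appeal to Lemma~\ref{lem:local-indices} to get $m<n$, a single application of \emph{(assoc2)} to $h(\tau u v)$, and the same congruence extension over $\gamma$ followed by $\eval$ and Corollary~\ref{cor:phi-via-h}. The only difference is the order of presentation: you reduce to the prefixes $\tau vu$ and $\tau uv$ before the local rewrite, whereas the paper does the rewrite first and then appends $\gamma$.
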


\begin{proof}
Write
\[
w(\pi)=\tau\,v\,u\,\gamma,
\qquad
w(\rho)=\tau\,u\,v\,\gamma,
\]
where \(\gamma=w(\alpha)\). Since \((u,v)\) is Tonks-independent in \(\pi\),
there is an entry of \(\beta\) strictly between \(u\) and \(v\). Hence
\(\beta\) is nonempty, and therefore \(\tau=w(\beta)\) is nonempty. Let
\(\mathbf A=h(\tau)\).

Set
\[
n=k(v;\tau),
\qquad
m=k(u;\tau v).
\]
Since \(u<v\), we also have
\[
k(u;\tau)=k(u;\tau v)=m.
\]
Since \(u<v\), inserting \(u\) before \(v\) increases the insertion index of \(v\)
by \(1\), so
\[
k(v;\tau u)=k(v;\tau)+1=n+1.
\]
Therefore
\[
h(\tau v u)=(\mathbf A\circ_n \mathbf 2^v)\circ_m \mathbf 2^u,
\qquad
h(\tau u v)=(\mathbf A\circ_m \mathbf 2^u)\circ_{n+1}\mathbf 2^v.
\]
Since \((u,v)\) is Tonks-independent in \(\pi\), there exists \(x\in\beta\) with
\(u<x<v\). Equivalently, there exists \(x\in\tau\) with \(u<x<v\). By
Lemma~\ref{lem:local-indices}, we have \(m<n\). Hence
\[
m+2\le n+1,
\]
because \(|\mathbf 2^u|=2\). We may therefore apply \emph{(assoc2)} to the term
\(h(\tau u v)\), obtaining
\[
(\mathbf A\circ_m \mathbf 2^u)\circ_{n+1}\mathbf 2^v
=
(\mathbf A\circ_n \mathbf 2^v)\circ_m \mathbf 2^u.
\]
Thus
\[
h(\tau u v)=_{\mathcal I} h(\tau v u).
\]
The remaining letters, namely the letters of \(\gamma\), are then inserted in
the same order on both sides. At each step the two partial words contain the
same set of letters, so the next insertion has the same index in both terms.
Since \(=_{\mathcal I}\) is a congruence, it follows that
\[
h(w(\rho))=_{\mathcal I} h(w(\pi)).
\]
Applying \(\eval\) and Corollary~\ref{cor:phi-via-h}, we conclude
\[
\phi(\rho)=g(\rho)=g(\pi)=\phi(\pi).
\]
\end{proof}

\begin{lemma}\label{lem:dependent-rotation}
Let
\[
\pi=\alpha\,u\,v\,\beta,
\qquad
\rho=\alpha\,v\,u\,\beta,
\qquad
u<v,
\]
and set \(\tau=w(\beta)\). If \((u,v)\) is not Tonks-independent in \(\pi\), then
\[
\phi(\pi)<_T \phi(\rho).
\]
\end{lemma}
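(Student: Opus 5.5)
The plan is to mirror the proof of Lemma~\ref{lem:independent-assoc2}. Work with the reversed words
\[
w(\pi)=\tau\,v\,u\,\gamma,
\qquad
w(\rho)=\tau\,u\,v\,\gamma,
\qquad
\gamma=w(\alpha).
\]
Then compare the two terms \(h(\tau v u)\) and \(h(\tau u v)\) locally, before the letters of \(\gamma\) are inserted. The inequality will then be transported to \(\phi\) through Corollary~\ref{cor:phi-via-h}.

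\medskip
\noindent\emph{Step 1: the local terms.} Since \((u,v)\) is not Tonks-independent in \(\pi\), no entry of \(\beta\), equivalently of \(\tau\), lies strictly between \(u\) and \(v\). Put \(n=k(v;\tau)\) and \(m=k(u;\tau v)\). Lemma~\ref{lem:local-indices} then gives \(m=n\). As in the proof of Lemma~\ref{lem:independent-assoc2}, \(k(u;\tau)=m\) and \(k(v;\tau u)=n+1\). Hence, with \(\mathbf A=h(\tau)\),
\[
h(\tau v u)=(\mathbf A\circ_n\mathbf 2^v)\circ_n\mathbf 2^u,
\qquad
h(\tau u v)=(\mathbf A\circ_n\mathbf 2^u)\circ_{n+1}\mathbf 2^v.
\]
In both terms the second index lies in the range \(n\le\cdot<n+2\), so \emph{(assoc1)} applies and gives
\[
h(\tau v u)=_{\mathcal I}\mathbf A\circ_n(\mathbf 2^v\circ_1\mathbf 2^u),
\qquad
h(\tau u v)=_{\mathcal I}\mathbf A\circ_n(\mathbf 2^u\circ_2\mathbf 2^v).
\]
If \(\tau\) is empty, there is no \(\mathbf A\) and \(n=1\). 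The two terms are then simply \(\mathbf 2^v\circ_1\mathbf 2^u\) and \(\mathbf 2^u\circ_2\mathbf 2^v\).

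\medskip
\noindent\emph{Step 2: reading off the rotation.} The two terms differ only in the subterm grafted at leaf \(n\).
\begin{itemize}
\item \(\eval(\mathbf 2^v\circ_1\mathbf 2^u)\) is the left comb \(((X,Y),Z)\), with \(X,Y,Z\) single leaves.
\item \(\eval(\mathbf 2^u\circ_2\mathbf 2^v)\) is the right comb \((X,(Y,Z))\).
\end{itemize}
Hence \(\eval(h(\tau u v))\) is obtained from \(\eval(h(\tau v u))\) by one Tamari rotation \(((X,Y),Z)\to(X,(Y,Z))\) at the node created by \(\mathbf 2^v\). Both trees have the same leaf sequence, with the leaves of the local subtree occupying positions \(n,n+1,n+2\).

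\medskip
\noindent\emph{Step 3: inserting the letters of \(\gamma\).} The letters of \(\gamma\) are inserted in the same order on both sides. At each step the processed letters form the same set, so the insertion indices \(k(\cdot;\cdot)\) coincide. I would prove, by induction on the length of \(\gamma\), that grafting a \(2\)-corolla at the same leaf \(j\) of two trees \(T\) and \(T'\) preserves the relation "\(T'\) is obtained from \(T\) by one rotation at a node \(p\)". This is the step I expect to need the most care; I would argue by cases on where leaf \(j\) sits.
\begin{itemize}
\item If leaf \(j\) lies outside the subtree at \(p\), the grafting happens in the common context.
\item Otherwise it lies in the part of the subtree playing the role of \(X\), \(Y\) or \(Z\). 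Rotation preserves the left-to-right order of these three blocks, so the grafting enlarges the same block on both sides.
\end{itemize}
In every case the new trees are again related by a rotation at the corresponding node. Syntactically, this is the statement that \((\mathbf A\circ_n\mathbf C)\circ_j\mathbf 2^x\) can be rewritten by \emph{(assoc1)} or \emph{(assoc2)} so that the insertion goes into \(\mathbf A\) or into \(\mathbf C\), with \(\mathbf C\) keeping its comb shape over enlarged blocks.

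\medskip
\noindent\emph{Step 4: conclusion.} It follows that \(g(\rho)\) is obtained from \(g(\pi)\) by a single rotation. A rotation always produces a different tree, and it goes upward in \(\le_T\) by definition, so \(g(\pi)<_T g(\rho)\). Corollary~\ref{cor:phi-via-h} then gives \(\phi(\pi)<_T\phi(\rho)\).
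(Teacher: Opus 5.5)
Your proof is correct and follows essentially the same route as the paper: you use the same local indices with $m=n$, the same two applications of \emph{(assoc1)} exposing $\mathbf 2^v\circ_1\mathbf 2^u$ versus $\mathbf 2^u\circ_2\mathbf 2^v$, and the same transport through the common insertions of $\gamma$ and Corollary~\ref{cor:phi-via-h}. Your Step 3 (grafting a corolla at the same leaf preserves a single rotation, by cases on whether the leaf lies in the context or in $X$, $Y$, $Z$) is the compatibility actually needed here. It is slightly more precise than the paper's appeal to substitution into a fixed leaf context, which by itself does not cover insertions of letters of $\gamma$ that land inside the rotated subtree.
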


\begin{proof}
Write
\[
w(\pi)=\tau\,v\,u\,\gamma,
\qquad
w(\rho)=\tau\,u\,v\,\gamma,
\]
where \(\gamma=w(\alpha)\).

We first treat the case \(\tau\neq\emptyset\). Let $\mathbf A=h(\tau)$ and set
\[
n=k(v;\tau),
\qquad
m=k(u;\tau v).
\]
Since \((u,v)\) is not Tonks-independent in \(\pi\), there is no \(x\in\beta\) with
\(u<x<v\), hence no \(x\in\tau\) with \(u<x<v\). By
Lemma~\ref{lem:local-indices}, we have \(m=n\).

Since \(u<v\), we also have
\[
k(u;\tau)=k(u;\tau v)=m=n.
\]
Therefore
\[
h(\tau v u)=(\mathbf A\circ_n \mathbf 2^v)\circ_n \mathbf 2^u.
\]
By \emph{(assoc1)},
\[
(\mathbf A\circ_n \mathbf 2^v)\circ_n \mathbf 2^u
=
\mathbf A\circ_n(\mathbf 2^v\circ_1 \mathbf 2^u).
\]
Moreover, since \(u<v\),
\[
k(v;\tau u)=k(v;\tau)+1=n+1.
\]
Hence
\[
h(\tau u v)=(\mathbf A\circ_n \mathbf 2^u)\circ_{n+1}\mathbf 2^v.
\]
Applying \emph{(assoc1)} again, we obtain
\[
(\mathbf A\circ_n \mathbf 2^u)\circ_{n+1}\mathbf 2^v
=
\mathbf A\circ_n(\mathbf 2^u\circ_2 \mathbf 2^v).
\]
Now
\[
\eval(\mathbf 2^v\circ_1 \mathbf 2^u)
\qquad\text{and}\qquad
\eval(\mathbf 2^u\circ_2 \mathbf 2^v)
\]
differ by exactly one right rotation, so
\[
\eval(h(\tau v u))<_T \eval(h(\tau u v)).
\]
If \(\tau=\emptyset\), then
\[
h(vu)=\mathbf 2^v\circ_1 \mathbf 2^u,
\qquad
h(uv)=\mathbf 2^u\circ_2 \mathbf 2^v,
\]
and therefore again
\[
\eval(h(vu))<_T \eval(h(uv)).
\]

It remains to pass from the prefixes \(\tau vu\) and \(\tau uv\) to the
words \(w(\pi)\) and \(w(\rho)\). Let the letters of \(\gamma\) be inserted one
after another. At each step the two partial words have the same set of letters,
and the insertion index for the next letter depends only on this set. Hence
the next insertion has the same index on both sides.

Tamari order is closed under substitution into a fixed leaf context: if
\(S<_T T\), then replacing the same leaf of any binary tree by \(S\) and by
\(T\) again gives a strict Tamari inequality. The rotation witnessing
\(S<_T T\) occurs inside the substituted subtree, while the surrounding context
is unchanged. Applying this observation successively to the common insertions
of the letters of \(\gamma\), we obtain
\[
g(\pi)<_T g(\rho).
\]
Using Corollary~\ref{cor:phi-via-h}, we conclude
\[
\phi(\pi)<_T \phi(\rho).
\]
\end{proof}

\begin{theorem}\label{thm:order-preservation}
The map $\phi:\Sym_n\to\Y_n$ is order-preserving from the weak Bruhat order on \(\Sym_n\) to the Tamari order on \(\Y_n\).
\end{theorem}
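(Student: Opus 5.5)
The plan is to reduce the statement to covering relations and then dispatch each cover with one of the two preceding local lemmas. Since \(\le_B\) is the reflexive–transitive closure of \(\lessdot_B\), and \(\le_T\) is reflexive and transitive, it suffices to show that \(\pi\lessdot_B\rho\) implies \(\phi(\pi)\le_T\phi(\rho)\). Indeed, if \(\pi\le_B\sigma\), we choose a saturated chain \(\pi=\pi^{(0)}\lessdot_B\pi^{(1)}\lessdot_B\cdots\lessdot_B\pi^{(k)}=\sigma\) and chain the resulting Tamari inequalities. The case \(k=0\) is trivial by reflexivity.

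Next we fix a cover. By the description of right weak order covers, \(\pi\lessdot_B\rho\) means that \(\rho=\pi s_i\) with \(\pi_i<\pi_{i+1}\). Writing \(u=\pi_i\) and \(v=\pi_{i+1}\), we have
\[
\pi=\alpha\,u\,v\,\beta,\qquad \rho=\alpha\,v\,u\,\beta,\qquad u<v,
\]
which is exactly the setting of Lemmas~\ref{lem:independent-assoc2} and~\ref{lem:dependent-rotation}. The split into cases is then forced by Definition~\ref{def:independence-perm}: either some entry of \(\beta\) lies strictly between \(u\) and \(v\), or none does.

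In the Tonks-independent case, Lemma~\ref{lem:independent-assoc2} gives \(h(w(\pi))=_{\mathcal I}h(w(\rho))\). Hence \(\phi(\pi)=\phi(\rho)\), and in particular \(\phi(\pi)\le_T\phi(\rho)\). In the dependent case, Lemma~\ref{lem:dependent-rotation} gives \(\phi(\pi)<_T\phi(\rho)\). This includes the situation \(\beta=\emptyset\), which that lemma handles through its \(\tau=\emptyset\) branch. In either case the cover is sent to a weak Tamari inequality, which completes the argument.

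I do not expect a real obstacle here, since the substantive work is already in the two lemmas. The one point to check is the direction of the rotation in Lemma~\ref{lem:dependent-rotation}: the tree \(\eval(\mathbf 2^v\circ_1\mathbf 2^u)\) has the shape \(((X,Y),Z)\) and \(\eval(\mathbf 2^u\circ_2\mathbf 2^v)\) has the shape \((X,(Y,Z))\). With the stated orientation \(((X,Y),Z)\to(X,(Y,Z))\) of Tamari covers, this rotation increases in the Tamari order, so the two orders are compatibly oriented. I would also note that the two cases are exhaustive, so every cover has been accounted for.
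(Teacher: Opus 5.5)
Your proposal is correct and matches the paper's proof: you reduce to covers $\pi\lessdot_B\rho$, split on whether $(u,v)$ is Tonks-independent, apply Lemma~\ref{lem:independent-assoc2} (equality) or Lemma~\ref{lem:dependent-rotation} (strict increase), and conclude by transitivity. Your extra check that $\eval(\mathbf 2^v\circ_1\mathbf 2^u)\to\eval(\mathbf 2^u\circ_2\mathbf 2^v)$ is an upward Tamari rotation is a reasonable sanity check that the paper leaves implicit.
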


\begin{proof}
It is enough to consider a cover
\[
\pi\lessdot_B \rho,
\]
so that
\[
\pi=\alpha\,u\,v\,\beta,
\qquad
\rho=\alpha\,v\,u\,\beta,
\qquad
u<v.
\]
If \((u,v)\) is Tonks-independent, then Lemma~\ref{lem:independent-assoc2} gives $\phi(\pi)=\phi(\rho)$. If \((u,v)\) is not Tonks-independent, then Lemma~\ref{lem:dependent-rotation} gives $\phi(\pi)<_T \phi(\rho)$. In either case, $\phi(\pi)\le_T \phi(\rho)$. Since the weak Bruhat order is the reflexive transitive closure of its covering relation, it follows that \(\phi\) is order-preserving.
\end{proof}

\section{Conclusion}

We have given a syntactic account of the two maps from permutations to binary
trees considered in this paper. Tonks' map \(\phi\) is obtained by evaluating the head-insertion encoding \(h\) on the reversed permutation, while the Loday--Ronco map \(\psi\) is
obtained by evaluating the decreasing encoding \(f\) on the permutation itself. Both constructions therefore reside in the same equational calculus
\(\mathcal I\) for the free non-symmetric, non-unital operad on one binary
generator.

The comparison also explains the relation between the two recursive descriptions.
For Tonks' map, the relevant decomposition is governed by the first letter of
the reversed word, equivalently by the last letter of the original permutation.
For the Loday--Ronco map, the decreasing encoding is normalized by splitting at
the maximal entry.

In particular, the order-preservation property of Tonks' map admits a local
operadic explanation. In the independent case, the identification of adjacent
swaps is governed by instances of \emph{(assoc2)}. In the dependent case, the
strict Tamari increase is exposed by the corresponding local rewriting analysis,
which yields the relevant rotation on the tree side.

It remains to ask whether similar normal-form methods apply to other nestohedra where canonical
projections and quotient constructions also occur.

\end{document}